\newtheorem{thm}{Theorem}
\newtheorem{lem}{Lemma}
\newtheorem{cor}{Corollary}
\theoremstyle{definition}
\newtheorem{defn}{Definition}
\newtheorem{rem}{Remark}
\newtheorem{ex}{Example}
\def\th@plain{%
\thm@headfont{\normalfont}
}
\newcommand{\p}{\mathbb{P}}
\newcommand{\e}{\mathbb{E}}
\newcommand{\R}{\mathbb{R}}
\newcommand{\N}{\mathbb{N}}
\newcommand{\Z}{\mathbb{Z}}
\newcommand{\bc}{\mathcal{B}}
\newcommand{\id}{\mathbf{1}}
\newcommand{\Ss}{\mathcal{S}}
\newcommand{\lk}{\left[}
\newcommand{\rk}{\right]}
\newcommand{\lc}{\left(}
\newcommand{\rc}{\right)}
\newcommand{\nlim}{\lim_{n\to\infty}}
\newcommand{\sumn}{\sum_{i=1}^n}
\newcommand{\sumd}{\sum_{i=1}^d}
\newcommand{\prodd}{\prod_{i=1}^d}
\newcommand{\suminf}{\sum_{i=1}^\infty}
\newcommand{\iinn}{{i\in\N}}
\newcommand{\leqn}{{1\leq i\leq n}}
\newcommand{\leqd}{{1\leq i\leq d}}
\newcommand{\tint}{{t\in T}}
\newcommand{\tnn}{{t\geq 0}}
\newcommand{\hto}{\lc H_t\rc_{t\geq 0}}
\newcommand{\bmt}{{\bm{t}}}
\newcommand{\bmx}{{\bm{x}}}
\newcommand{\bmX}{{\bm{X}}}
\newcommand{\bmY}{{\bm{Y}}}
\newcommand{\binfty}{{\bm{\infty}}}
\newcommand{\rmd}{\mathrm{d}}
\DeclareMathOperator{\argmax}{argmax}
\begin{document}

\begin{frontmatter}

\title{Exact simulation of continuous max-id processes with applications to exchangeable max-id sequences}
\author[1]{Florian Br\"uck \corref{mycorrespondingauthor}}
\address[1]{\footnotesize Technical University Munich, Lehrstuhl f\"ur Finanzmathematik, Parkring 11, 85748 Garching, Germany; florian.brueck@tum.de}
\date{\today}

\cortext[mycorrespondingauthor]{Corresponding author. Email address: \url{florian.brueck@tum.de}}

\begin{abstract}
    An algorithm for the unbiased simulation of continuous max-(resp.\ min-)id stochastic processes is developed. The algorithm only requires the simulation of finite Poisson random measures on the space of continuous functions and avoids the necessity of computing conditional distributions of infinite (exponent)measures. The complexity of the algorithm is characterized in terms of the expected number of simulated atoms of the Poisson random measures on the space of continuous functions. Special emphasis is put on the simulation of exchangeable max-(or min-)id sequences, in particular exchangeable Sato-frailty sequences. Additionally, exact simulation schemes of exchangeable exogenous shock models and exchangeable max-stable sequences are sketched. 
\end{abstract}

\begin{keyword} 
exchangeable min-id sequences \sep
exponent measure \sep
max-id process
\MSC[2020] Primary 60G70 \sep
Secondary 60G18
\end{keyword}

\end{frontmatter}

\section{Introduction}
This paper provides an exact simulation algorithm for real-valued continuous stochastic processes $\bmX:=\lc X_t\rc_\tint$ with the property that for every given $n\in\N$ there exist independent and identically distributed (iid) stochastic processes $\lc\bmX^{(i,n)}\rc_{1\leq i\leq n }$ such that  
\begin{align}
    \bmX\sim \max_\leqn \bmX^{(i,n)} .\label{eqnmaxidrep}
\end{align}
Such stochastic processes are called maximum-infinitely divisible (max-id) processes and they essentially constitute the class of possible weak limits of pointwise maxima of independent stochastic processes \cite{baalkemamaxidprocess1993}. Recently, max-id processes have attracted attention in the modeling of extreme events \cite{padoan2013extreme,huser2021maxidmodels,bopp2021hierarchical}, while its subclass of max-stable processes is the central object of study in the extreme value theory of iid stochastic processes.

Under the assumption that $\bmX$ and $t\mapsto \sup \{ x\in\R\mid \p\lc X_t>x\rc=1\}$ are continuous, \cite{Ginecontmaxidprocess1990,baalkemamaxidprocess1993} show that $\bmX$ can be represented as the pointwise maximum of a (usually infinite) Poisson random measure (PRM) $N=\sum_{\iinn} \delta_{f_i}$ on the space of continuous functions, i.e.,
\begin{align}
    \bmX\sim \max_{\iinn} f_i \label{eqnmaxidrepasprm}.
\end{align}
The intensity measure $\mu(\cdot):=\e\lk N(\cdot) \rk$ of the PRM $N$ is also called the exponent measure of $\bmX$ and it uniquely characterizes its distribution.
The initial motivation for our simulation algorithm for $\bmX$ stems from \cite[Algorithm 1]{dombryengelkeoestingexactsimmaxstable2016}, who have provided an exact simulation algorithm for continuous max-stable processes. In this paper, we generalize the ideas of \cite{dombryengelkeoestingexactsimmaxstable2016} to a simulation algorithm for continuous max-id processes. The key ingredient of their simulation algorithms is the PRM representation of $\bmX$ in (\ref{eqnmaxidrepasprm}) and its associated exponent measure. Basically, both simulation algorithms can be deduced from results of \cite{dombryeyiminkostrongmixing2012,dombryeyiminkoconddist2013} about the conditional distribution of a specific decomposition of the PRM $N$. This specific decomposition of the PRM $N$ allows to simulate only those functions which are relevant to determine the values of $\bmX$ at certain locations $t_1,\ldots,t_d$ and to approximate the whole sample path of $\bmX$ via the pointwise maximum over those finitely many functions. The mechanism of our simulation algorithm can be summarized as follows.
\begin{enumerate}
    \item Simulate only those functions $\lc f^{(1)}_j\rc_{1\leq j\leq k_1}$ which maximize (\ref{eqnmaxidrepasprm}) at the first location $t_1$. 
    \item For $n\in\{2,\ldots,d\}$: Given the maximizing functions at locations $t_1,\ldots,t_{n-1}$, i.e.\ $\Big\{ f^{(i)}_j \mid 1\leq i \leq n-1 ,\ 1\leq j\leq k_i\Big\}$, we only simulate those functions $\lc f^{(n)}_j\rc_{1\leq j\leq k_{n}}$ which possibly contribute to the maximum in (\ref{eqnmaxidrepasprm}) at location $t_{n}$.
    \item  Use $\hat{\bmX}=\max_{\{1\leq j\leq k_n,1\leq n\leq d\}} f^{(n)}_j$ to approximate the sample path of $\bmX$ and additionally obtain $(X_{t_1},\ldots,X_{t_d})=(\hat{X}_{t_1},\ldots,\hat{X}_{t_d})$.
\end{enumerate}

Motivated by the recent results of \cite{brueckmaischererexminidid2020} we apply the proposed simulation algorithm for continuous max-id processes to the simulation of exchangeable sequences of random variables $\bmY:=(Y_i)_\iinn$ with the property that for every $n\in\N$ there exists i.i.d.\ sequences of random variables $\bm Y:=\lc Y^{(i,n)}_j\rc_{j\in\N}$
\begin{align}
    \bm Y\sim \min_\leqn \bm Y^{(i,n)}. \label{eqnminidrep}
\end{align}
Such sequences are known as minimum-infinitely divisible (min-id) sequences and are as well characterized by a so-called exponent measure \cite{vatanmaxid1985}. It is obvious that $1/\bmY$ is a sequence of exchangeable random variables with stochastic representation (\ref{eqnmaxidrep}), therefore simply being a particular example of a general continuous max-id process with index set $T=\N$.
According to de Finetti's seminal theorem every exchangeable sequence of random variables admits the (unique) stochastic representation
\begin{align}
    \bm Y  \sim \lc \inf \big\{ t\in\R\ \big\vert \ H_t\geq E_i\big\}\rc_{i\in\N}, \label{stochrepexchseq}
\end{align} 
where $\lc E_i\rc_\iinn$ is a sequence of independent and identically distributed (iid) Exponential random variables with unit mean and $\lc H_t\rc_{t\in\R}$ denotes a (unique in law) non-negative and non-decreasing (nnnd) stochastic process with c\`adl\`ag paths.  
\cite{brueckmaischererexminidid2020} show that when $\bmY$ has the stochastic representation (\ref{eqnminidrep}) then the associated nnnd c\`adl\`ag process $H$ satisfies the property that for every given $n\in\N$ there exist iid stochastic processes $\lc H^{(i,n)}\rc_{1\leq i\leq n }$ such that
\begin{align}
    H\sim \sum_\leqn  H^{(i,n)} .\label{eqnidprocessrep}
\end{align}
Such processes are called infinitely divisible (id) and were extensively investigated in \cite{idprocessesrosinski2018}. In analogy to the L\'evy--Khintchine triplet of id random vectors on $\R^d$, id c\`adl\`ag processes are characterized by a so-called (path) L\'evy measure on the space of c\`adl\`ag functions and a deterministic c\`adl\`ag (drift-)function \cite{idprocessesrosinski2018}. 

In theory, the stochastic representation (\ref{stochrepexchseq}) immediately suggests a simulation algorithm for $\bm Y$ as the first passage times of the id process $H$ over iid Exponential barriers. In practice, however, even the approximate simulation of the associated id process $H$ is usually a challenging task. For instance, when the $d$-dimensional marginal distributions of $\bm Y$ becomes a multivariate exponential distribution \cite{marshallolkin1967}, then $H$ must belong to the class of L\'evy 
processes \cite{maischerer2009}, i.e.\ $H$ must have stationary and independent increments. Unfortunately, even for L\'evy processes, exact simulation algorithms are only known for specific families and approximate simulation algorithms are extensively discussed in the literature, e.g.\ see \cite{bondesson1982,Damien1995,asmussenrosinski2001}. Thus, the lack of the ability to simulate general processes $H$ limits the practical use of the stochastic representation (\ref{stochrepexchseq}), even though one may be able to analytically characterize the law of the id process $H$.

To overcome this challenge, we exploit a stochastic representation of $\bm Y$ in terms of minima over points of a Poisson random measure, which can be derived from (\ref{eqnmaxidrepasprm}) and the L\'evy measure and drift of the associated id process $H$.
\cite[Corollary 3.7]{brueckmaischererexminidid2020} shows that the exponent measure of $1/\bm Y$ can be uniquely characterized as a (possibly infinite) mixture of iid sequences in terms of the L\'evy measure and drift of the associated id process $H$. 
This will allow us to construct an exact simulation algorithm for $\bm Y$ via $1/\bm Y$, while essentially simulating a finite number of conditionally iid sequences. 

The rather general theoretical results about the simulation of exchangeable min-id sequences are then used to derive an exact simulation algorithm for the class of exchangeable Sato-frailty sequences, which have been fully characterized analytically in \cite{maischenkscherer2017twonovel}. Exchangeable Sato-frailty sequences can be characterized as the class of exchangeable min-id sequences associated to self-similar additive processes, i.e.\ they are associated to a stochastically continuous c\`adl\`ag processes with independent increments which have the additional property that there exists some $\gamma>0$ such that for all $a\geq 0$ $\lc H_{at} \rc_{\tnn}\sim \lc a^\gamma H_t\rc_\tnn$, see e.g.\ \cite[Section 3]{Satolevyprocess} for more details on self-similar additive processes. $H$ via (\ref{stochrepexchseq}). Even though analytical expressions of their multivariate marginal distributions are available, the simulation of such sequences has so far only been feasible for small sample sizes or some particular cases, which is due to the fact that the simulation of the associated self-similar additive process is generally complicated. We characterize the exponent measure of an exchangeable Sato-frailty sequence in terms of the L\'evy measure of the associated self-similar additive process and illustrate that our simulation algorithm essentially boils down to the simulation of two-dimensional random vectors.

In a recent article \cite{zhongexactsimmaxid2021} have independently developed a simulation algorithm for continuous max-id processes on compact non-empty real domains $T$ under the additional assumption of continuous marginal distributions. Their algorithm follows similar ideas as \cite[Algorithm 1]{dombryengelkeoestingexactsimmaxstable2016} translated to the max-id case. However, both of these algorithms require the computation of certain conditional distributions of the (infinite) exponent measure, which is usually a challenging task. Moreover, our framework is more general than that of \cite{zhongexactsimmaxid2021}, since we will explicitly consider arbitrary locally compact metric spaces $T$ as index sets and non-continuous marginal distributions. This level of generality is necessary for our purposes, since we put special emphasis on simulation algorithms for exchangeable max-id sequences $\bmX$ which have locally compact (but not compact) index sets and possibly non-continuous marginal distributions.

The remainder of the paper is organized as follows. Section \ref{sectionmaxidprocesses} summarizes the theoretical background on continuous max-id processes. Section \ref{sectionalgorithm} introduces the exact simulation algorithm for continuous max-id processes and characterizes the complexity of the algorithm. In Section \ref{sectionsimexsequence} we illustrate how our simulation algorithm for continuous max-id processes can be used to simulate exchangeable max-id sequences and we derive a particular exact simulation algorithm for exchangeable Sato-frailty sequences in Section \ref{sectionexhfrailtyseq}. Section \ref{sectionsimulation} provides a short example of how our simulation algorithm for exchangeable Sato-frailty sequences could be used in practice. \ref{sectionsimmaxidvector} provides a general exact simulation algorithm tailored to max-id random vectors. Technical lemmas and proofs can be found in \ref{apptechproofs}.

\section{Continuous max-id processes}
\label{sectionmaxidprocesses}
Let us first introduce some notation. The index set $T$ always denotes a locally compact metric space. Moreover, let $C(T):=\{ f \mid f:T\to \R \text{ is continuous} \}$ denote the space of real-valued continuous functions on $T$ equipped with the Borel $\sigma$-algebra generated by the topology of uniform convergence on compact sets. For some given function $h\in C(T)$ let $C_h(T):=\{ f \mid f\in C(T), f\geq h, f\not= h\}$ denote the space of continuous functions dominating $h$. A real-valued stochastic process defined on an abstract probability space $(\Omega,\mathcal{F},\p)$ is denoted by $\bmX:=\lc X_t\rc_\tint$. Vectors in $\R^d$ are denoted in lower case bold letters. The projection of $\bmX$ to $\bmt:=(t_1,\ldots,t_d)$ is denoted as $\bmX_\bmt:=(X_{t_1},\ldots,X_{t_d})$. The operators $\max,\min,\inf,\sup$ are always interpreted as pointwise operators, e.g.\ $\sup_{\iinn} f_i$ is interpreted as the pointwise supremum of the functions $\lc f_i\rc_\iinn$. The Dirac measure at a point $f$ is denoted as $\delta_f$. For a (random) point measure $N=\sum_{i\in\N} \delta_{f_i}$ we frequently use the notation $f\in N$ to denote that $N$ has an atom at $f$, i.e.\ to denote that $N(\{f\})\geq 1$.  
With this notation at hand we can state the definition of max-id processes and their associated vertices.
\begin{defn}[Max-id process]
A stochastic process $\bmX \in\R^T$ is called max-id if for all $n\in\N$ there exist iid stochastic processes $\lc \bm \bmX^{(i,n)}\rc_{\leqn}$ such that
\begin{align*}
    \bmX\sim \max_\leqn \bmX^{(i,n)} .
\end{align*}
The vertex of $\bmX$ is defined as the function
$$\lc h_\bmX(t)\rc_\tint :=\lc \sup \{x\in\R \mid \p\lc X_t>x\rc=1\}\rc_\tint\in[-\infty,\infty)^T.$$
\end{defn}

The most common choices for the index set $T$ of a max-id process are subsets of $\R^d$ and $\Z^d$. However, since requiring additional structure for $T$ does not yield any simplifications in the following derivations, we keep the discussion as general as possible. 

It is obvious that $g(\bmX):=\lc g\lc X_t\rc\rc_\tint$ defines a max-id process for every non-decreasing real-valued function $g$ whenever $\bmX$ is a max-id process. This implies that $\exp\lc \bmX\rc-\exp\lc h_\bmX\rc$ defines a non-negative max-id process with vertex $\bm 0$. 
In this paper, we restrict the discussion to continuous max-id processes with continuous vertex, meaning that $h_\bmX$ and $t\mapsto X_t(\omega)$ are continuous functions for every $\omega\in\Omega$. Thus, we can assume that a continuous max-id process $\bmX$ with continuous vertex is non-negative with vertex $h_\bmX=\bm 0$, since every continuous max-id processes $\bmX^\prime$ with continuous vertex $h_{\bmX^\prime}$ can be transformed to a continuous max-id process $\bmX$ with vertex $h_\bmX=\bm 0$ by setting $\bmX:=\exp\lc\bmX^\prime\rc-\exp\lc h_{\bmX\prime} \rc$.

Under the assumption of a continuous and finite vertex, \cite{Ginecontmaxidprocess1990,dombryeyiminkostrongmixing2012} have shown that a continuous max-id process $\bmX$ can be represented as the pointwise maxima of atoms of a Poisson random measure (PRM) on $C_{h_\bmX}(T)$. We summarize their results in the following theorem with the convention $\max_\emptyset :=\bm 0$.

\begin{thm}[Spectral representation of continuous max-id process \cite{Ginecontmaxidprocess1990,dombryeyiminkostrongmixing2012}]
\label{spectralrepresentationmaxidprocess} \textcolor{white}{a}
\begin{enumerate}
    \item If $\bmX$ is a continuous max-id process with vertex $h_\bmX=\bm 0$ then there exists a PRM $N$ on $C_0(T)$ with locally finite intensity measure $\mu$, called exponent measure, which satisfies
    \begin{align}
         \ \mu\lc\bigg\{ f\in C_0(T)\ \bigg\vert\ \sup_{k\in K} f(k)>\epsilon\bigg\}\rc <\infty \text{ for all compact }K\subset T\text{ and }\epsilon>0 \label{finitenessintmeasure}
    \end{align}
    such that
\begin{align*}
\bmX\sim \max_{f\in N} f .  
\end{align*}
    
\item Conversely, given a locally finite measure $\mu$ on $C_0(T)$ which satisfies (\ref{finitenessintmeasure}), there exists a PRM $N$ on $C_0(T)$ with intensity $\mu$ such that
\begin{align*}
\bmX:= \max_{f\in N}  f
\end{align*}
defines a continuous max-id process with vertex $h_\bmX=\bm 0$. 
\end{enumerate}
\end{thm}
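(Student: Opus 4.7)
The plan is to reduce the statement to the established correspondence between max-infinitely divisible laws and Poisson random measures, first at the level of finite-dimensional projections and then at the level of the path space $C_0(T)$, following the blueprint of \cite{Ginecontmaxidprocess1990,dombryeyiminkostrongmixing2012}.

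For part (1), the starting point is that for every finite tuple $\bmt=(t_1,\ldots,t_d)$ the projection $\bmX_\bmt$ is max-id in $\R^d$. Classical results for max-id random vectors yield a unique exponent measure $\mu_\bmt$ on $[0,\infty)^d\setminus\{\bm{0}\}$ such that $\bmX_\bmt$ is distributed as the pointwise maximum of the atoms of a PRM with intensity $\mu_\bmt$. I would then verify that the family $\{\mu_\bmt\}$ is consistent under coordinate projections, which produces an extension to a $\sigma$-finite Borel measure $\mu$ on $[0,\infty)^T$. The non-trivial step is to show that $\mu$ is actually concentrated on $C_0(T)$: this relies on Assumption \ref{ass1} combined with the pathwise continuity of $\bmX$, typically established via a tightness argument for the finite-dimensional exponent measures on compacta. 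Once $\mu$ is known to live on $C_0(T)$, condition (\ref{finitenessintmeasure}) follows from the identity $\p\lc\sup_{k\in K}X_k\leq\epsilon\rc=\exp\lc-\mu\lc\{f\in C_0(T):\sup_{k\in K} f(k)>\epsilon\}\rc\rc$ together with the fact that the left-hand side is strictly positive by continuity and the choice $h_\bmX=0$.

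For part (2), given such a $\mu$, I would construct the PRM $N$ on $C_0(T)$ and define $\bmX:=\max_{f\in N} f$. Pointwise finiteness of $X_t$ is immediate: for a compact neighborhood $K$ of $t$, condition (\ref{finitenessintmeasure}) ensures that for each $\epsilon>0$ only finitely many atoms of $N$ exceed $\epsilon$ somewhere on $K$. For path continuity one decomposes $N$ into its ``large'' part, consisting of atoms exceeding level $\epsilon$ somewhere on $K$, and its ``small'' part: the large part is almost surely a finite pointwise maximum of continuous functions and is therefore continuous on $K$, while the small part contributes at most $\epsilon$ uniformly on $K$. Letting $\epsilon\downarrow 0$ and exhausting $T$ by compacta yields continuity throughout $T$. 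The max-id property itself follows from the superposition/thinning characterization of PRMs: decomposing $N$ as the superposition of $n$ i.i.d.\ PRMs with intensity $\mu/n$ provides the representation required by (\ref{eqnmaxidrep}).

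The main obstacle I anticipate lies in the continuity arguments on both sides: in (1) one must show that the extended measure $\mu$ places no mass on discontinuous functions so that the extension from finite-dimensional exponent measures to a measure on $C_0(T)$ is faithful, and in (2) one must upgrade the pointwise a.s.\ continuity of $\max_{f\in N} f$ into a genuine sample path in $C_0(T)$. Both issues reduce to controlling the modulus of continuity $\sup_{d(s,t)<\delta}|f(s)-f(t)|$ under $\mu$ on compact subsets of $T$, which in \cite{Ginecontmaxidprocess1990,dombryeyiminkostrongmixing2012} is addressed via a tightness criterion on $C(T)$ combined with the structure of the finite-dimensional exponent measures inherited from max-id random vectors.
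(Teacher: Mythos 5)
The paper does not prove this theorem: it is stated explicitly as a summary of results imported from \cite{Ginecontmaxidprocess1990,dombryeyiminkostrongmixing2012}, so there is no internal proof to compare against. Your outline is consistent with the strategy actually used in those references (finite-dimensional max-id exponent measures, extension to path space, tightness for concentration on continuous functions, and the large-atom/small-atom decomposition plus Poisson superposition for the converse), and the converse direction in particular is essentially complete as you sketch it: finitely many atoms exceed $\epsilon$ on a compact $K$ by (\ref{finitenessintmeasure}), their pointwise maximum is continuous, the remaining atoms perturb the supremum by at most $\epsilon$ uniformly on $K$, and superposition of $n$ i.i.d.\ PRMs with intensity $\mu/n$ gives max-infinite divisibility. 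Two points in part (1) carry more weight than your sketch acknowledges. First, the extension of the consistent family $\{\mu_\bmt\}$ is not a direct Kolmogorov extension, since the $\mu_\bmt$ are infinite measures; one has to localize (e.g.\ restrict to $\{f:\sup_{k\in K}f(k)>\epsilon\}$, where the restricted measures are finite, normalize, extend, and patch over $\epsilon\downarrow 0$ and an exhaustion of $T$). Second, the positivity of $\p\lc \sup_{k\in K}X_k\le\epsilon\rc$ does not follow from continuity and the marginal vertices being $0$ alone; it uses the max-id structure (finiteness of the finite-dimensional exponent measures away from the vertex) together with separability to pass from finite tuples to the supremum over $K$. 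With those caveats, your proposal is a faithful reconstruction of the cited proof rather than a new route.
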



It is easy to see that $\p\big( N(C_0(T))=\infty\big) =1$ if and only if $\mu$ is an infinite measure. For example, this is the case if $\p(X_t>0)=1$ for some $t\in T$. Since this is a desired property in many applications, a simulation of $\bmX$ via the simulation of the infinite PRM $N$ is usually practically infeasible. However, it is crucial to observe that the value of $\bmX_\bmt:=(X_{t_1},\ldots,X_{t_d})$ is fully determined by the atoms of the random measure of extremal functions at $\bmt$  
\begin{align}
    N^+_\bmt:=\sum_{f\in N} \delta_{f}\id_{\big\{ f(t_i)=X_{t_i} \text{ for some } \leqd \big\}}. \label{extremalmeasure}
\end{align} 
Thus, all atoms of the random measure of subextremal functions at $\bmt$
\begin{align}
  N^-_\bmt:=\sum_{f\in N} \delta_{f}\id_{\big\{ f(t_i)<X_{t_i} \text{ for all } \leqd \big\}} \label{subextremalmeasure}
\end{align}
are irrelevant when we are solely interested in $\bmX_\bmt$. $N^+_\bmt$, resp.\ $N^-_\bmt$, are called the extremal, resp.\ subextremal, point measure at $\bmt$. Fig.\ \ref{figprmfunction} illustrates the extremal and subextremal functions of a continuous max-id process on $\R$ with $\bmt=(0,1,\ldots,5)$.
\cite[Section 2]{dombryeyiminkoconddist2013} analyze the extremal and subextremal random point measures of a continuous max-id process and show that they are indeed well-defined. Moreover, they show that
\begin{align}
    &N^+_\bmt \textit{ is an almost surely finite random measure if and only if one of the following conditions is satisfied:} \nonumber  \\ 
     &\ (i)\ \ \ \ \ \mu(C_0(T))<\infty,\textit{ or} \nonumber \\
     &(ii)\ \ \ \ \ \mu(C_0(T))=\infty\textit{ and }\min_\leqd X_{t_i}>0\textit{ almost surely.}  \label{extremalmeasurefinitenesscondition}
\end{align}

\begin{figure}
    \centering
    \includegraphics[scale=0.5]{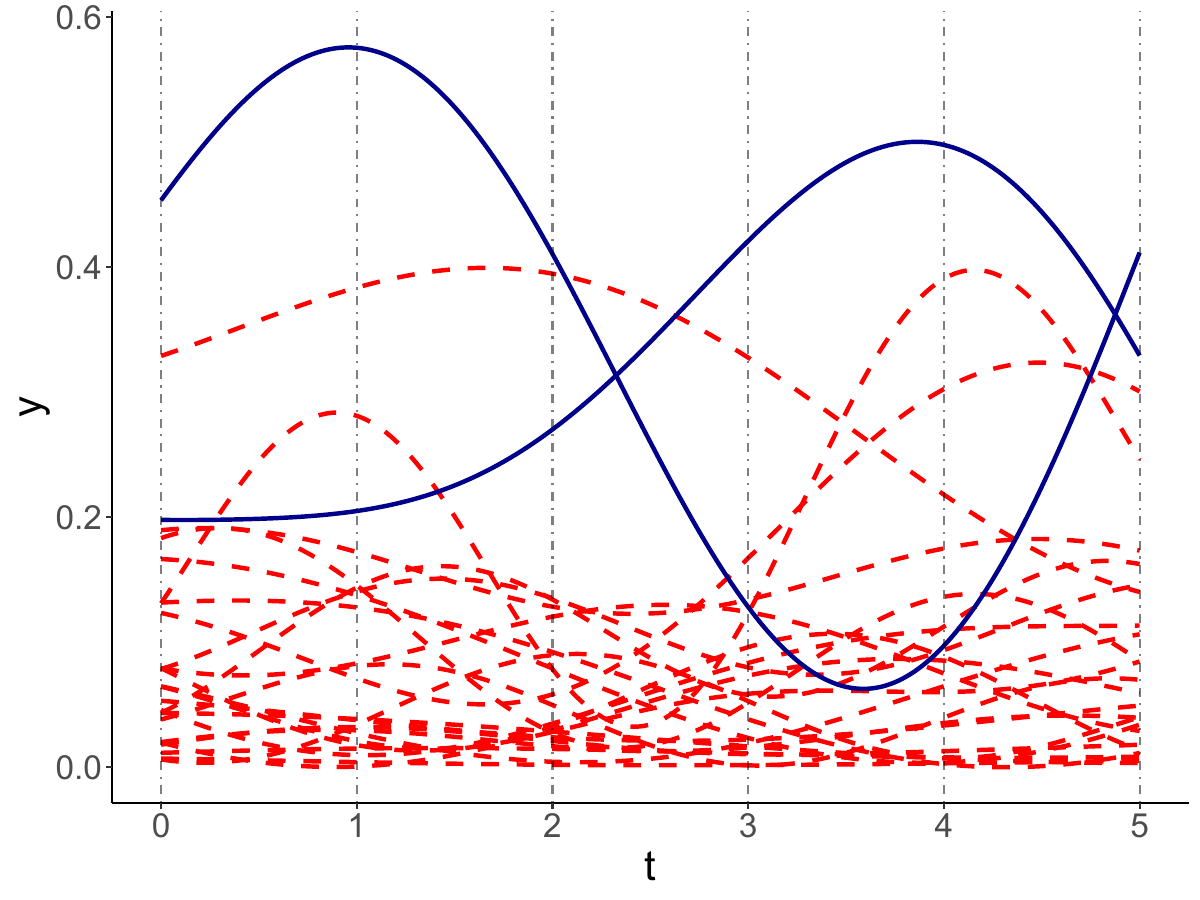}
\caption{\footnotesize  Illustration of extremal and subextremal functions of a PRM $N$. Functions in solid-blue belong to $N^+_{(0,\ldots,5)}$, functions in dashed-red belong to $N^-_{(0,\ldots,5)}$.}
\label{figprmfunction}
\end{figure}
If one of the conditions in (\ref{extremalmeasurefinitenesscondition}) is satisfied, we only need to simulate a finite number of atoms of the random measure $N^+_\bmt$ in order to obtain an exact simulation of $\bmX_\bmt$ via
$$ \bmX_\bmt=\lc \max_{f\in N^+_\bmt} f(t_1),\ldots ,\max_{f\in N^+_\bmt} f(t_d)\rc. $$
Additionally, a simulation of $N^+_\bmt$ also yields an approximation (from below) of the whole sample path of $\bmX$ via 
$$\bmX \approx\hat{\bmX}:=(\hat{X}_t)_\tint:=\lc \max_{f\in N^+_\bmt} f(t)\rc_\tint.$$
Thus, to obtain an exact simulation of $\bmX_\bmt$ and to approximate the sample path of $\bmX$ via $\hat{\bmX}$ we simply need to focus on simulation algorithms of the finite random point measure $N^+_\bmt$. 

The main ingredient of our simulation algorithm for $N^+_\bmt$ will be based on the conditional distribution of $N^-_\bmt$ given $N^+_\bmt$, which is derived in \cite[Lemma 3.2]{dombryeyiminkostrongmixing2012}. More specifically, it is shown that the conditional distribution of $N^-_\bmt$ given $N^+_\bmt$ is given by the distribution of a PRM with intensity $\id_{\{ f(t_i)<X_{t_i} , \leqd \}}\rmd \mu(f)$. To illustrate the implications of this result, let us assume we are given an initialization $N^+_{t_1}\subset N^+_\bmt\not=N$ of $N^+_\bmt$. To obtain $N^+_{(t_1,t_2)}$ we only need to consider those atoms of $N^-_{t_1}$ which belong to $N^+_{t_2}$. Given $N^+_{t_1}$, the random measure $N^+_{t_2}\setminus N^+_{t_1}$ is the restriction of $N^-_{t_1}$ to the (measurable) set 
$$\bigg\{  \Tilde{N}\textit{ extremal point measure on  }C_0(T)\textit{ at location }t_2 \textit{ and concentrated on } \Big\{f(t_2)\geq \max_{\tilde{f}\in N_{t_1}^+}  \tilde{f}(t_2) \Big\} \bigg\}.$$
Now, \cite[Lemma 3.2]{dombryeyiminkostrongmixing2012} implies that, conditional on $N^+_{t_1}$, the random measure $N^+_{t_2}\setminus N^+_{t_1}$ has the same distribution as $\argmax_{f\in \bar{N}}f(t_2)$, where $\bar{N}$ is a PRM with intensity 
\begin{align*}
\id_{\big\{ f(t_1)<\max_{\Tilde{f}\in N^+_{t_1}} \Tilde{f}(t_1) \text{ and }f(t_2)\geq \max_{\Tilde{f}\in N^+_{t_1}}\Tilde{f}(t_2) \big\}}\rmd \mu(f).  
\end{align*}
Assuming that $\max_{\Tilde{f}\in N^+_{t_1}}\Tilde{f}(t_2)$ is positive, (\ref{finitenessintmeasure}) implies that $\bar{N}$ is a \textbf{finite} PRM. Therefore, one may simulate $N^+_\bmt$ by iterative simulation of finite PRMs with intensities
\begin{equation}
\id_{\big\{ f(t_i)<\max_{\Tilde{f}\in N^+_{(t_1,\ldots,t_n)}}\Tilde{f}(t_i) \text{ for all } \leqn \text{ and } f(t_{n+1})\geq \max_{\Tilde{f}\in N^+_{(t_1,\ldots,t_n)}}\Tilde{f}(t_{n+1}) \big\}}\rmd \mu(f), \ 1\leq n\leq d-1.  \label{eqnintconditionalextremalmeasure}     
\end{equation}
From a practical perspective one should note that it is sufficient to be able to simulate from a finite PRM with intensity $\id_{\{f(t)\geq c \}}\rmd \mu(f)$ for all $t\in T$ and $c>0$ to simulate the PRMs with intensities (\ref{eqnintconditionalextremalmeasure}). To verify the claim, recall that the restriction of any PRM $\hat{N}$ with intensity $\hat{\mu}$ to an arbitrary measurable set $A$ again defines a PRM with intensity $\id_{\{f\in A\}}\rmd \hat{\mu}(f)$. Thus, to simulate a PRM with intensity (\ref{eqnintconditionalextremalmeasure}), one can simulate a PRM with intensity $$\id_{\big\{f(t)\geq \max_{\Tilde{f}\in N^+_{(t_1,\ldots,t_n)}}\Tilde{f}(t_{n+1})  \big\}}\rmd \mu(f)$$
and simply ignore those atoms which do not satisfy the constraints in (\ref{eqnintconditionalextremalmeasure}).

\begin{rem}[Infinite $N^+_{t_i}$]
\label{reminfiniteextremalmeasure}
It is easy to see that the event $X_{t_i}=0$ implies $N^+_{t_i}=N$. Thus, when $\mu$ is an infinite measure, the simulation of $N^+_{t_i}$ requires the simulation of infinitely many atoms with probability $\p\lc X_{t_i}=0\rc=\exp( -\mu( $ $ \{f\in C_0(T)\mid f(t_i)>0\}))$.
However, one may avoid this unpleasant situation by discarding finite exponent measures from $\mu$. Consider the set of possibly $0$-valued locations
\begin{align*}
   J_0:= \{ j\in \{1,\ldots,d\}\ \big\vert\ \p\lc X_{t_j}=0\rc>0\big\}
\end{align*}
and consider the exponent measures of the form 
\begin{align}
 \mu_j\lc \cdot \rc=\mu\lc \cdot \cap  \big\{ f\in C_0(T) \mid f(t_j)>0, f(t_k)=0,\ k<j,\ k\in J_0\big \}\rc,\ j\in J_0. \label{eqndefmuj}  
\end{align}
Note that the $\mu_j$ are supported on disjoint sets and that each $\mu_j$ is finite, since $0<\p\lc X_j=0\rc \leq \exp\lc-\mu_j \lc C_0(T)\rc\rc $. Therefore, it is possible to (exactly) simulate $( \hat{\bmX}_j)_{j\in J_0}$ by the simulation of PRMs with finite exponent measures $\lc \mu_j\rc_{j\in J_0}$. It remains to consider the residual of the exponent measure $\mu$, given by $\tilde{\mu}:=\mu-\sum_{j\in J_0} \mu_j$, which is more easily described as
\begin{align}
    \Tilde{\mu}\lc \cdot \rc=\mu\lc \cdot \cap  \big\{ f\in C_0(T) \mid f(t_j)=0 , j\in J_0 \big\}\rc. \label{eqndefmutilde}
\end{align}
Let $\tilde{N}$ denote a PRM with intensity $\tilde{\mu}$ and let $\tilde{\bmX}$ denote the continuous max-id process associated with the exponent measure $\tilde{\mu}$. It is not difficult to show that $\tilde{\mu}$ is either vanishing or an infinite measure and that $\tilde{\bmX}$ satisfies $\p\lc\Tilde{X}_{t_j}=0\rc=1$ for all $j\in J_0$. Therefore, the exact simulation of $\tilde{\bmX}_\bmt$ only involves the simulation of the finite random measure $\tilde{N}^+_{(t_i)_{i\not \in J_0}}$. 
Moreover, it is easily seen that $\bmX$ admits the representation
$$\bmX=\max\big\{ \Tilde{\bmX} ;\max_{j\in J_0} \hat{\bmX}_j\big\},$$
which shows that $\bmX_t$ can be determined by the pointwise maxima of finitely many finite random point measures.
\end{rem}

So far, we have assumed that we are given a finite initialization $N^+_{t_1,\ldots,t_n}$ of $N^+_\bmt$ with $\max_{f\in N^+_{t_1,\ldots,t_n}} f(t_{n+1})>0$ and, under this assumption, we have shown that we only need to simulate from finite PRMs to obtain $N^+_{t_1,\ldots,t_{n+1}}$, resp.\ $\hat{\bmX}$. In Section \ref{sectionalgorithm} we show that the ability to simulate from a PRM with intensity $\id_{\{f(t)\geq c \}}\rmd \mu(f)$ for every $t\in T$ and $c>0$ is also sufficient to obtain such initializations of $N^+_\bmt$.
Thus, we construct an algorithm for the exact simulation of $\bmX_t$ and approximation of $\bmX$ via $\hat{\bmX}$, which solely requires the ability to simulate finite PRMs with intensities $\id_{\{f(t)\geq c \}}\rmd \mu(f)$ for all $t\in T$ and $c>0$.

\section{Exact simulation of continuous max-id processes}
\label{sectionalgorithm}

The main ingredient of our algorithm is the possibility to simulate from the finite PRMs with intensities $\id_{\{f(t)\geq c\}}\rmd\mu$ for all $t\in T$ and $c>0$. 
Based on our developments in Section \ref{sectionmaxidprocesses}, we propose the following algorithm for the exact simulation of a continuous max-id process with vertex $\bm 0$. 
\begin{algorithm}
\SetAlgoLined \LinesNumbered
\KwResult{Unbiased sample of $(X_{t_1},\ldots,X_{t_d})$ and approximation of the max-id process $(X_t)_{t\in T}$.}
Set $\mu_j\lc \cdot \rc=\mu\lc \cdot \cap  \big\{ f\in C_0(T) \mid f(t_j)>0, f(t_k)=0,\ k<j,\ k\in J_0\big \}\rc ,\  j\in J_0$\;
Set $\Tilde{\mu}\lc \cdot \rc=\mu\lc \cdot \cap  \big\{ f\in C_0(T) \mid f(t_j)=0 , j\in J_0 \big\}\rc$\;
\For{$j\in J_0$}{
Simulate a finite PRM $N_j$ with intensity $\mu_j$ and set $\hat{\bmX}_j=\max_{f\in N_j}f$\;
}
Set $\tilde{\bmX}=\bm 0$\;
    \For{$i=1,\ldots,d$, $i\not\in J_0$}{
        \eIf{$\Tilde{X}_{t_i}=0$ }{
                Set $\tilde{N}^+=\emptyset$\;
                Set $c_u=\infty$ and $c_l=c$ for some $c>0$\;
            \While{ $\tilde{N}^+=\emptyset$ }{
                Simulate a finite PRM $\tilde{N}^+$ with intensity $\id_{\{c_u> f(t_i)\geq  c_l\}}\rmd \Tilde{\mu}(f)$\;
                \For{$f\in \tilde{N}^+$}{
                    \If{$f(t_k)\geq \Tilde{X}_{t_k}$ for some $ k< i$, $k\not\in     J_0$ }{
                        Set $\tilde{N}^+=\tilde{N}^+\setminus\{f\}$
                        }
                    }
                Set $c_u=c_l$ and set $c_l=c_l/2$\;
                } 
            }    
        {Simulate a finite PRM $\tilde{N}^+$ with intensity $\id_{\{f(t_i)\geq \Tilde{X}_{t_i}\}}\rmd \Tilde{\mu}(f)$\;
                \For{$f\in \tilde{N}^+$}{
                    \If{$f(t_k)\geq \Tilde{X}_{t_k}$ for some $ k< i$, $k\not\in J_0$ }{
                        Set $\tilde{N}^+=\tilde{N}^+\setminus\{f\}$
                        }
                    }
        }
        Set $\tilde{N}_{t_i}^+=\big\{f\in \tilde{N}^+ \mid f(t_i)\geq \tilde{f}(t_i) \text{ for all }\tilde{f}\in\tilde{N}^+\big\} $\;
        Set $\tilde{\bmX}=\max\big\{  \max_{f\in \tilde{N}_{t_i}^+} f,\tilde{\bmX}\big\}$\;
    }
Set $ \hat{\bmX}=\max\big\{  \max_{j\in J_0} \hat{\bmX}_j,\tilde{\bmX}\big\}$\;
 \Return{$\hat{\bmX}$}
 \caption{Exact simulation of continuous max-id process with vertex $\bm 0$} 
 \label{alg1}
\end{algorithm}

The validity of Algorithm \ref{alg1} is verified in the following theorem. 
\begin{thm}[Validity of Algorithm \ref{alg1}]
\label{thmvalalg1}
Let $\bmX$ denote a continuous max-id process with vertex $h_\bmX=\bm 0$ and exponent measure $\mu$. Then, Algorithm \ref{alg1} stops after finitely many steps and its output $\hat{\bmX}$ satisfies $\hat{\bmX}_\bmt\sim\bmX_\bmt$.
\end{thm}

Clearly, Algorithm \ref{alg1} reduces to lines $6-30$ if no margin of $\bmX_\bmt$ has an atom at $0$, since $J_0=\emptyset$ and $\tilde{\mu}=\mu$. Moreover,
it is worth mentioning that even though $\hat{\bmX}_\bmt$ is max-id, the stochastic process $\hat{\bmX}\leq \bmX$ is generally not max-id, since $N^+_\bmt$ is not a PRM on $C_0(T)$.

\begin{rem}[Reason for splitting $\mu$ into $\sum_{j\in J_0} \mu_j+\tilde{\mu}$]
The reason for splitting $\mu$ into the disjoint parts $\mu_j$ and $\tilde{\mu}$ is to divide the simulation of $\bmX$ into separate simulations of finite random measures. First, we directly simulate $\lc X_{t_j}\rc_{j\in J_0}$, i.e.\ those locations at which $\{X_{t_j}=0\}$ occurs with positive probability, since a naive simulation of $X_{t_j}$ via the respective extremal functions at $t_j$ may result in the necessity of simulating an infinite PRM with positive probability (Remark \ref{reminfiniteextremalmeasure}). Second, we simulate those atoms of a PRM $N$ with intensity $\mu$, which have not been simulated yet and possibly contribute to $\bmX_\bmt=\max_{f\in N} f(\bmt)$. Since $( \Tilde{X}_{t_j})_{j\in J_0}=\bm 0$ by the definition of $\tilde{\mu}$, we can solely focus on the simulation of $( \Tilde{X}_{t_j})_{j\not\in J_0}$. This precisely requires the simulation of the extremal functions at $(t_j)_{j\not \in J_0}$ of the PRM $\tilde{N}$ with intensity $\tilde{\mu}$.  The key observation is that the definition of the $\mu_j$ ensures that $\tilde{\bmX}$ does not have atoms at $0$ at the locations $(t_j)_{j\not \in J_0}$, which implies that the extremal point measure $\tilde{N}^+_{(t_j)_{j\not \in J_0}}$ is finite by (\ref{extremalmeasurefinitenesscondition}). Therefore, we can use (\ref{eqnintconditionalextremalmeasure}) to obtain a sample of $\tilde{N}^+_{(t_j)_{j\not \in J_0}}$ via the simulation of finite PRMs. Combining the two simulated processes by taking pointwise maxima we obtain an approximation $\hat{\bmX}$ of $\bmX$ which satisfies $\hat{\bmX}_\bmt\sim \bmX_\bmt$. 
\end{rem}

\begin{rem}[Simulation algorithm for max-stable processes \cite{dombryengelkeoestingexactsimmaxstable2016}]
\label{remalg1maxstable}
A max-stable process with unit Fr\'echet margins can be represented as $\bmX\sim \max_{i\in\N}\zeta_i \psi_i$, where $N=\sum_{i\in\N} \delta_{(\zeta_i,\psi_i)}$ is a PRM with intensity $\rmd\mu=s^{-2}\rmd s\rmd Q$ and $Q$ is a probability measure on $C_0(T)$ such that $\int f(t)\rmd Q(f)=1$ for all $\tint$. In this case, one can show that $N^+_{t}$ only contains a single function, denoted as $\hat{\psi}_t$. The regular conditional distribution of $\hat{\psi}_t$ given $X_t=z$ is obtained in \cite[Proposition 4.2]{dombryeyiminkoconddist2013}. This result can be used to represent the PRM with intensity $\id_{\{s\psi(t)>0\}}s^{-2}\rmd s\rmd Q(\psi)$ as a PRM with intensity $s^{-2}\rmd s\rmd Q_t$, where $Q_t$ denotes the conditional distribution of $\hat{\psi}_t/X_t$ given $X_t$. Thus, one may simulate a PRM with intensity $\id_{\{f(t)\geq c\}}\rmd \mu$ by successively simulating points of a PRM with intensity $\id_{\{s\geq c\}}s^{-2}\rmd s\rmd Q_t$. With this specific procedure for the simulation of a PRM with intensity $\id_{\{f(t)\geq c\}}\rmd \mu$, Algorithm \ref{alg1} essentially reduces to the exact simulation algorithm of continuous max-stable processes in \cite{dombryengelkeoestingexactsimmaxstable2016}.
\end{rem}

\begin{rem}[Conditional distribution of max-id process]
\label{remconddistmaxidprocess}
Similar to max-stable processes with unit Fr\'echet margins, \cite[Proposition 4.1]{dombryeyiminkoconddist2013} provides the conditional distribution of the extremal function of a continuous max-id process $\bmX$ with continuous marginal distributions at a location $t$, given that $X_t=z$. Intuitively, the conditional distribution can be described as the regular conditional distribution of the exponent measure $\mu$ given $X_t=z$, denoted as $Q_{t,z}$, where the formal definition of a regular conditional distribution of a possibly infinite exponent measure can be found in \cite[Appendix A2]{dombryeyiminkoconddist2013}. Thus, the extremal function for a single location $t$ can be found by first drawing a random variable $Z\sim X_t$ and then drawing the extremal function according to $Q_{t,Z}$. Surprisingly, not only the extremal function at a location $t$ given follows the conditional (on $Z$) distribution $Q_{t,Z}$, but so do the subextremal functions. More formally, assume that you are given a PRM $\sum_{i\in\N}\delta_{Z_i}$ on $(0,\infty)$ where $Z:=\max_{i\in\N} Z_i \sim X_t$. Then, conditioned on $\lc Z_i\rc_{i\in\N}$, the PRM with intensity $\id_{\{ f(t)>0\}}\rmd \mu$ can be represented as $\sum_{i\in\N} \delta_{f_{Z_i}}$, where the $f_{Z_i}\sim Q_{t,Z_i}$ are independent. \cite{zhongexactsimmaxid2021} have recently and independently proposed an algorithm for the exact simulation of max-id processes, which is based on the just described procedure to simulate a PRM with intensity $\id_{\{ f(t)>0\}}\rmd \mu$. However, determining and simulating the conditional distribution $Q_{t,Z}$ of an exponent measure is a challenging task and is only a sufficient but not a necessary criterion for the simulation of the PRM with intensity (\ref{eqnintconditionalextremalmeasure}).
\end{rem}

\begin{rem}[Simulation algorithm for max-id random vectors]
Exponent measures of max-id random vectors are often described via the geometric structure of $[-\infty,\infty)^d$, e.g.\ as scale mixtures of probability distributions on unit spheres. Examples of such families of max-id random vectors are given by random vectors with reciprocal Archimedean copula \cite{genestreciparchim2018}, max-stable distributions \cite{resnickextreme2013} and reciprocals of exogenous shock models \cite{scherersloot2019exogenous}. For these families it is generally surprisingly inconvenient to apply Algorithm \ref{alg1} due to the difficulty of describing the PRM with intensity $\id_{\{f(i)\geq c\}}\rmd \mu$ in a simple manner. Therefore, we provide a simulation algorithm which is tailored to the specific representations of exponent measures on $\R^d$ in \ref{sectionsimmaxidvector}.
\end{rem}

\subsection{Complexity of Algorithm \ref{alg1}}
\label{subseccomplexity}
The main difficulty of Algorithm \ref{alg1} lies in the simulation of the atoms (functions) of the PRMs in line 12 and 21. Therefore, to analyze the complexity of Algorithm \ref{alg1}, we may focus on the number of functions that need to be simulated to obtain $\hat{\bmX}$. Since the number of simulated functions during the execution of Algorithm \ref{alg1} is a random variable, we characterize its complexity in terms of the expected number of simulated functions. To this purpose, we extend a result by \cite{oestingschlatherzhou2013,oestingschlatherzhou2018} about the expected size of the extremal point measure of max-stable processes at locations $\bmt=(t_i)_{1\leq i\leq d}$ to max-id processes.

\begin{lem}
\label{lemexpectedsizeextremalfunction}
Let $\bmX$ denote a continuous max-id process with vertex $h_\bmX=\bm 0$ and exponent measure $\mu$. The expected size of the extremal point measure at location $\bmt$ is given by
$$\e\lk \big\vert N_{\bmt}^+\big\vert\rk=\e\lk \int_{C_0(T)} \id_{\{ f(t_i)\geq X_{t_i} \text{ for some }1\leq i \leq d \}} \rmd\mu(f)\rk.$$
\end{lem}

To deduce the expected number of simulated functions during the execution of Algorithm \ref{alg1} we additionally assume that the simulation of the atoms of a PRM with intensity $\tilde{\mu}$ can be conducted in a top-down fashion as follows: 
\begin{align}
    &\text{For all }(t_i)_{i\not\in J_0}\text{ we assume that we can consecutively simulate the atoms $\lc f^{(i)}_j\rc_{j\in\N}$ of a PRM }\nonumber \\
    &\tilde{N}=\sum_{j\in\N} \delta_{f^{(i)}_j}\text{ with intensity }\tilde{\mu}\text{ such that }f^{(i)}_1(t_i)\geq f^{(i)}_2(t_i)\geq \cdots \ .  \label{assumptionsimprm}
\end{align}
Assumption (\ref{assumptionsimprm}) allows to conduct lines 7-30 of Algorithm \ref{alg1} more efficiently: For a fixed $i\not\in J_0$ one consecutively simulates $f^{(i)}_1,f^{(i)}_2,\ldots$ such that $f^{(i)}_1(t_i)\geq f^{(i)}_2 (t_i)\geq \cdots$ and stops as soon as one has found all extremal functions at a location $t_i$. This is achieved as soon as one has found a $j\in\N$ such that $f^{(i)}_j$ is an extremal function at location $t_i$ and $f^{(i)}_{j+1}$ is subextremal function at location $t_i$. In general, it is necessary to simulate the $f^{(i)}_j$ until the first subextremal function at a location $t_i$ is found, since $\Tilde{\bmX}$ may not have continuous marginal distributions and there may be more than one extremal function at a location $(t_i)_{i\not\in J_0}$. Of course, if the distribution of $\tilde{X}_{t_i}$ is continuous, one can stop as soon as the first extremal function at location $(t_i)_{i\not\in J_0}$ is found, since there can only exist one extremal function at each continuous margin of $\Tilde{\bmX}$ by \cite[Proposition 2.5]{dombryeyiminkoconddist2013}. Thus, assumption (\ref{assumptionsimprm}) allows to avoid the simulation of more than one subextremal function at each location $(t_i)_{i\not\in J_0}$, which may happen if one conducts Algorithm \ref{alg1} in its original formulation of Theorem \ref{thmvalalg1}. 

For the remainder of this subsection we assume that Algorithm \ref{alg1} is conducted according to assumption (\ref{assumptionsimprm}). Assumption (\ref{assumptionsimprm}) may be regarded as reasonable, since it is satisfied for many continuous max-id processes. For instance, it is satisfied if one assumes that $\Tilde{\bmX}$ has continuous marginal distributions and that one conducts the simulation of the PRMs in lines 12 and 21 of Algorithm \ref{alg1} based on the conditional distribution of a max-id process as described in Remark \ref{remconddistmaxidprocess}. Moreover, the assumption may also be satisfied when simulating certain exchangeable max-id sequences, see Sections \ref{sectionexhfrailtyseq} and \ref{sectionsimulation} below. 

\begin{thm}
\label{thmcomplexityofalg1}
Under the assumption that a PRM with intensity $\tilde{\mu}$ may be simulated according to assumption (\ref{assumptionsimprm}), the expected number of the simulated functions during the execution of Algorithm \ref{alg1} is given by
$$d-\vert J_0\vert+\mu\lc \big\{ f\in C_0(T) \mid f(t_j)>0 \text{ for some } j\in J_0\big \} \rc +\sum_{ i\not\in J_0} \e\lk  \tilde{\mu}\lc \Big\{ f\in C_0(T)\mid f(t_i)\in \big[\Tilde{X}_{t_i},\infty\big) \Big\} \rc \rk .$$
Moreover, when $\Tilde{\bmX}_{(t_i)_{i\not \in J_0}}$ has continuous marginal distributions, the expected number of simulated functions during the execution of Algorithm \ref{alg1} is equal to 
$$d-\vert J_0\vert+ \mu\lc \big\{ f\in C_0(T) \mid f(t_j)>0 \text{ for some } j\in J_0\big \} \rc.$$
\end{thm}

Theorem \ref{thmcomplexityofalg1} may be interpreted as follows: The expected number of simulated functions is equal to the number of locations where $\bmX$ has continuous margins plus an additional term which accounts for the possibility that $\vert N_{t}^+\vert>1$ is possible at locations where $\bmX$ has non-continuous margins.

It is easy to see that Theorem \ref{thmcomplexityofalg1} includes the complexity characterization \cite[Proposition 9]{dombryengelkeoestingexactsimmaxstable2016} of the algorithm for simulation of continuous max-stable processes described in Remark \ref{remalg1maxstable}. There, the authors showed that the expected number of simulated functions in their algorithm is equal to the number of locations where the continuous max-stable process is simulated exactly. Theorem \ref{thmcomplexityofalg1} shows that this result also holds when Algorithm \ref{alg1} is applied to continuous max-id processes with continuous margins. Thus, when measuring simulation complexity only in terms of the expected number of simulated functions, there is no increase in simulation complexity when considering a continuous max-id processes with continuous margins. Moreover, it follows that, as a byproduct, we have shown that the expected number of simulated functions in the algorithm for exact simulation of a continuous max-id process with continuous margins and compact index set of \cite{zhongexactsimmaxid2021} is equal to the number of locations where the max-id process is simulated exactly, since it is exactly based on the assumption that the PRMs appearing in Algorithm \ref{alg1} may be simulated according to assumption (\ref{assumptionsimprm}).

\section{Exact simulation of exchangeable max(min)-id sequences}
\label{sectionsimexsequence}
When considering max-id sequences, i.e.\ $T=\N$, the assumption of continuity of the max-id process $\bmX$ is irrelevant, since $C_0(\N)=[0,\infty)^\N\setminus\{\bm 0\}$. Therefore, Algorithm \ref{alg1} is applicable to all max-id sequences with vertex $h_\bmX=\bm 0$, which may be satisfied for every max-id sequence after suitable transformations of the margins.
However, to apply Algorithm \ref{alg1}, it remains to find a suitable description of the exponent measure of a max-id sequence $\bmX$ on $[0,\infty)^\N\setminus\{\bm 0\}$ such that the PRM with intensity $\id_{\{f(i)\geq c\}}$ can be simulated. To achieve this, we focus on the results of \cite{brueckmaischererexminidid2020}, who describe the structure of exponent measures of exchangeable min-id sequences, i.e.\ of exchangeable sequences $\bmY:=1/\bmX$, where $\bmX$ is max-id. To this purpose, let us recall the most important results of \cite{brueckmaischererexminidid2020}.
\begin{thm}[\text{\cite[Corollary 3.7]{brueckmaischererexminidid2020}}]
    $\bmY\in(0,\infty]^\N$ is an exchangeable min-id sequence if and only if \begin{align}
    \bmY \sim \lc \inf\{t\geq 0 \mid H(t)\geq E_i\}\rc_\iinn,    \label{correspexminidseqidprocess}
    \end{align}
    where $\lc E_i\rc_\iinn$ are iid $Exp(1)$ and $H=\lc H_t\rc_{t\geq 0}\in [0,\infty]^{[0,\infty)}$ is a (unique in law) nnnd id  c\`adl\`ag process which satisfies $H_0=0$. 
\end{thm} 
We say that an exchangeable max-id sequence $\bmX$ corresponds to an id process $H$ if and only if $\bmY=1/\bmX$ is the exchangeable min-id sequence corresponding to $H$. Similar to max-id sequences, the survival function of $\bmY$ can be expressed in terms of an exponent measure $\bar{\mu}$. It can be related to the exponent measure of $\bmX$ noting that $\p(\bmY>\bmx)=\p\lc \bmX<\frac{1}{\bmx}\rc=\exp\lc-\mu\lc\lc-\binfty,\frac{1}{\bmx}\rc^\complement \rc\rc =\exp\lc-\bar{\mu}\lc (\bmx,\binfty]^\complement\rc\rc $, where $\bar{\mu}(A):=\mu\lc \{\bmx\in [0,\infty)^\N \mid 1/\bmx \in A\}\rc$ is called the exponent measure of the exchangeable min-id sequence $\bmY$. From this relation it is easy to see that a PRM $\bar{N}=\suminf \delta_{\bm f_i}$ with intensity $\bar{\mu}$ can be transformed to a PRM $N=\suminf \delta_{1/\bm f_i}$ with intensity $\mu$. Thus, we can generate atoms of $N$ by taking reciprocals of atoms of $\bar{N}$. In the following, we will show how the correspondence (\ref{correspexminidseqidprocess}) can be used to generate atoms from $\bar{N}$ (and thus from $N$).

Let us recall several facts about id processes. It is well-known that id random vectors on $[0,\infty]^d$ are in one-to-one correspondence with a pair $(\upsilon,b)$, where $\upsilon$ is a (L\'evy)measure on $[0,\infty]^d\setminus\{\bm 0\}$ satisfying certain integrability conditions and $b\in[0,\infty]^d$ is a deterministic (drift)vector. \cite{idprocessesrosinski2018} has elegantly extended this characterization to id processes and \cite{brueckmaischererexminidid2020} have used these results to prove that the Laplace-transform of a nnnd id c\`adl\`ag process which satisfies $H_0=0$ is given by
\begin{align}
    \e\lk \exp\lc -\sumd a_i H_{t_i}\rc\rk=\exp\lc-\sumd a_i b(t_i) -\int_{M} 1-\exp\lc-\sumd a_i g(t_i)\rc \rmd \nu(g)\rc,\ \bm a,\bmt\in[0,\infty)^d, \label{eqnlaplacetrafoidprocess}
\end{align}
where $\nu$ is a unique (L\'evy)measure on the path space 
$$\mathbf{M}:=\big\{g:[0,\infty)\to [0,\infty] \mid g(0)=0,  g\text{ nnnd and c\`adl\`ag},\ g\not=\bm 0\big\},$$
which satisfies $\int_{\mathbf{M}} \min \{1,g(t)\}\rmd\nu(g)<\infty$ for all $t\geq 0$ and $b\in \mathbf{M}$ is a unique deterministic (drift)function.
From (\ref{eqnlaplacetrafoidprocess}) and the formula for the Laplace transform of a PRM, see e.g.\ \cite[Section 3]{resnickextreme2013}, one can deduce that 
$$H\sim b+ \hat{H}$$
may be decomposed into a deterministic drift $b$ and a ``completely random'' process $\lc\hat{H}_t\rc_{t\geq 0}\sim \lc \int_{\mathbf{M}} g(t)\rmd N_H(g) \rc_{t\geq 0}$, where $N_H:=\sum_\iinn \delta_{g_i}$ is a PRM on $\mathbf{M}$ with intensity measure $\nu$.

Combining (\ref{correspexminidseqidprocess}) and (\ref{eqnlaplacetrafoidprocess}) we obtain that
\begin{align*}
\p\lc\bmY>\bmx\rc=\e\lk\exp\lc-\suminf H_{x_i}\rc\rk=\exp\lc-\suminf  b\lc x_i \rc -\int_{{\mathbf{M}}} 1-\exp\lc-\suminf g\lc x_i\rc\rc \rmd \nu(g)\rc.    
\end{align*}
This shows that the exponent measure $\bar{\mu}$ of the exchangeable min-id sequence $\bmY$ is given by
$$ \bar{\mu}\lc A\rc = \bar{\mu}_b (A) +\int_{\mathbf{M}} \otimes_{i=1}^\infty \big( 1-\exp\lc -g(\cdot)\rc\big) (A) \rmd\nu(g), $$
where 
\begin{itemize}
    \item $\bar{\mu}_b$ denotes the exponent measure of an iid sequence with stochastic representation $\lc \inf\Big\{t\geq 0\mid  b(t)\geq E^{(2)}_i\Big\}\rc_{i\in\N}$ and marginal distribution function $1-\exp(-b(\cdot))$, 
    \item $\otimes_{i=1}^\infty \big( 1-\exp\lc -g(\cdot)\rc\big)$ denotes the distribution of an iid sequence with marginal distribution function $1-\exp( -g(\cdot))$ and $\int_{\mathbf{M}} \otimes_{i=1}^\infty \big( 1-\exp\lc -g(\cdot)\rc\big) (A) \rmd\nu(g)$ denotes the exponent measure of an exchangeable max-id sequence with stochastic representation $\lc \inf\Big\{t\geq 0\mid  \hat{H}(t)\geq E^{(1)}_i\Big\}\rc_{i\in\N}$
\end{itemize}
and $\lc E^{(j)}_i\rc_{\iinn}, j\in\{1,2\}$ denote sequences of iid Exp$(1)$ distributed random variables. In other words, $\bar{\mu}$ is the sum of the exponent measure of an iid sequence and a possibly infinite mixture of iid sequences. Since addition of two exponent measures stochastically corresponds to applying component-wise minima to two independent min-id sequences, we get
$$ \bm Y \sim  \min \bigg\{\lc \inf\Big\{t\geq 0\mid  b(t)\geq E^{(2)}_i\Big\}\rc_{i\in\N} ; \lc \inf\Big\{t\geq 0\mid  \hat{H}(t)\geq E^{(1)}_i\Big\}\rc_{i\in\N}\bigg\},$$
which shows that the only difficulty in the simulation of $\bm Y$ is the simulation of the sequence $\lc \inf\Big\{t\geq 0\mid \hat{H}(t)\geq E^{(1)}_i\Big\}\rc_{i\in\N}$.  Hence, for our analysis, we can ignore the presence of $\bar{\mu}_b$, i.e.\ assume that $b=0$, and focus on the simulation of the sequence $\lc \inf\Big\{t\geq 0\mid  \hat{H}(t)\geq E^{(1)}_i\Big\}\rc_{i\in\N}$ with exponent measure of the form
\begin{align}
    \bar{\mu}\lc A\rc = \int_{\mathbf{M}} \otimes_{i=1}^\infty \lc 1-\exp( -g(\cdot))\rc (A) \rmd\nu(g). \label{exponentmeasuremaxidsequence}
\end{align} 

Representation (\ref{exponentmeasuremaxidsequence}) implies that the atoms of the PRM $\bar{N}$ with intensity $\bar{\mu}$ can be generated as follows:
\begin{enumerate}
    \item[(i)] Generate a PRM $N_H=\sum_{i\in\N} \delta_{g_i}$ on $\mathbf{M}$ with intensity measure $\nu$,
    \item[(ii)]  For each $g_i$, draw an iid sequence $\bar{\bm f}_i$ with distribution function $ 1-\exp\big( -g_i(\cdot)\big)$,
    \item[(iii)]  Set $\bar{N}=\sum_{i\in\N} \delta_{\bar{\bm f}_i}$.
\end{enumerate}
As mentioned previously, a PRM $N$ with intensity $\mu$ is then obtained by taking the reciprocal of each atom of $\bar{N}$, i.e.\ by defining $N=\sum_{i\in\N} \delta_{1/\bar{\bm f}_i}$. Therefore, a PRM with intensity $\id_{\{f(n)\geq c\}}\rmd \mu$ can be generated by the simulation of a finite PRM with intensity 
\begin{align}
    &\bar{\mu}\lc\cdot\cap\  \{f(n)\leq 1/c\}\rc=\int_{\mathbf{M}} \otimes_{i=1}^\infty \big( 1-\exp\lc -g(\cdot)\rc\big) \lc \cdot\cap \{  f(n)\leq 1/c\}\rc \rmd\nu(g) \nonumber  \\
    &=\int \otimes_{i=1}^\infty \big( 1-\exp\lc -g(\cdot)\rc\big)  \lc \big\{ \bm f\in\ \cdot\ \big\}\ \big\vert\ f(n)\leq 1/c  \rc \Big( 1-\exp\big( -g\lc 1/c\big)\rc\Big)    \rmd\nu(g). \label{intfiniteprm}
\end{align} 
It is important to observe that $\lc 1-\exp(-g(1/c))\rc   \rmd\nu(g)$ defines an exponent measure with total finite mass 
\begin{align}
    C_c:=-\log\bigg( \e\big[ \exp\lc -H_{1/c}\rc\big] \bigg)=-\log\big(\p\lc X_1< c\rc\big). \label{eqnconstantcc}
\end{align}
Thus, to simulate from a PRM with intensity $\id_{\{f(n)\geq c\}}d\mu$ it is sufficient to be able to simulate from the probability measure $P_c:=C_c^{-1}\lc 1-\exp(-g(1/c))\rc \rmd\nu(g)$ on $\mathbf{M}$. Since the measure $\nu$ can be chosen rather arbitrarily it is hopeless to expect a general recipe for the simulation of $P_c$. However, there are many families of stochastic processes for which $\nu$ can be conveniently described such that simulation from $P_c$ becomes feasible. One of these families is the class of self-similar additive processes (\cite{JEANBLANC2002},\cite[Section 3]{Satolevyprocess}), which will be investigated in the next section.

\section{Exchangeable Sato-frailty sequences}
\label{sectionexhfrailtyseq}
Choosing $\lc H_t\rc_{t\geq 0}$ in (\ref{correspexminidseqidprocess}) as a non-negative and non-decreasing additive process, also called additive subordinator, gives rise to the class of so-called exchangeable exogenous shock models \cite{maischenkscherer2016,scherersloot2019exogenous}. Exchangeable exogenous shock models are characterized by the property that every $d$-dimensional margin $\bmY_d$ of $\bmY$ can be stochastically represented as the minimum of independent random shocks, each of them affecting a certain subset of components of $\bmY_d$. More formally, every $d$-dimensional margin of the exchangeable exogenous shock model $\bmY$ can be represented as 
\begin{align}
\bmY_d\sim\lc \min  \big\{E_I \ \big\vert \ I\subset\{1,\ldots,d\},\ i\in I\big\} \rc_\leqd, \label{exexshockrep}
\end{align}
where the shocks $\lc E_I\rc_{I\subset\{1,\ldots,d\}}$ denote independent non-negative random variables with continuous distribution function and $E_{I_1}\sim E_{I_2}$ if $\vert I_1\vert=\vert I_2\vert$. Moreover, the distribution of the $E_I$ is uniquely linked to the Laplace transform of the associated additive subordinator $H$ \cite{maischenkscherer2016,scherersloot2019exogenous}. In principle, the results of \cite{maischenkscherer2016,scherersloot2019exogenous} could be used to simulate the $d$-dimensional margins of an exchangeable exogenous shock model. However, even if the Laplace transform of the associated additive subordinator is known analytically, it is numerically challenging to compute the distribution of the individual shocks $E_I$ and $2^d$ random variables have to be simulated to determine $\bmY_d$. Thus, if $d$ is large it is practically infeasible to simulate an exchangeable exogenous shock model via the representation (\ref{exexshockrep}). Alternatively, one could use the representation (\ref{correspexminidseqidprocess}) to generate a sample of $\bmX$, which circumvents the curse of dimensionality. Unfortunately, the simulation of the additive subordinator $H$ is usually infeasible or only possible approximatively. Therefore, an exact and efficient simulation of high dimensional exchangeable exogenous shock models has remained an open problem to date. 

A subclass of exchangeable exogenous shock models has been investigated in \cite{maischenkscherer2017twonovel} by restricting $\lc H_t\rc_{t\geq 0}$ to the class of self-similar subordinators (aka Sato subordinators), meaning that $H$ is an additive subordinator and that there exists some index $\gamma>0$ such that for all $a\geq 0$ $\lc H_{at}\rc_{t\geq 0}\sim \lc a^\gamma H_t\rc_{t\geq 0}$. The exchangeable sequences associated to self-similar subordinators are called exchangeable Sato-frailty sequences. \cite[Section 3]{Satolevyprocess} shows that every self-similar additive process $H$ with index $\gamma$ is uniquely associated to its distribution at unit time. The law of $H_1$ belongs to the class of self-decomposable distributions, meaning that for every $c\in(0,1)$ there exists a random variable $H^{(c)}$ independent of $H_1$ such that $H_1\sim cH_1+H^{(c)}$. Self-decomposable laws constitute a broad subclass of infinitely divisible distributions, e.g.\ containing the (inverse-)Gaussian, Laplace, (tempered-)stable, Fr\'echet, Pareto, Exponential and (inverse-)Gamma distribution as well as several laws appearing in financial modeling as the CGMY, Normal Inverse Gaussian and Meixner distribution \cite{barndorffnielsennigdistr1996,schoutensteugels1998meixner, carrgemanmadanyor2002cgmymodel} to provide some examples. Furthermore, every self-decomposable distribution can be obtained as the law of a self-similar additive process at unit time. Thus, there is a one-to-one correspondence of self-similar additive subordinators with index $\gamma$, non-negative self-decomposable distributions and the class of exchangeable Sato-frailty sequences. \cite[Proposition 16.5]{Satolevyprocess} shows that the index $\gamma$ of a self-similar process can be changed to an arbitrary index $\tilde{\gamma}$ via the simple time change $t\mapsto t^{\tilde{\gamma}/\gamma}$. Combined with \cite[Corollary 3.2]{brueckmaischererexminidid2020}, which shows that a time-change of the self-similar subordinator corresponds to the marginal transformation $a\mapsto a^{-\tilde{\gamma}/\gamma}$ of the associated exchangeable Sato-frailty-sequence, we can w.l.o.g.\ assume that $\gamma=1$ to simplify further derivations.

The key quantity of our simulation algorithm will be the univariate L\'evy measure of the self-decomposable distribution of $H_1$. It allows us to derive a convenient representation of path L\'evy measure of the associated self-similar subordinator, which then translates into a simple representation of the exponent measure of the associated exchangeable Sato-frailty sequence via (\ref{exponentmeasuremaxidsequence}). To this purpose, we recall several characterizations of self-decomposable laws, which are provided in \cite[Section 3]{Satolevyprocess}. \cite[Theorem 15.10]{Satolevyprocess} shows that the L\'evy measure $\upsilon$ of a non-negative self-decomposable distribution is absolutely continuous w.r.t.\ the Lebesgue measure with density of the form $\rmd \upsilon =k(a)a^{-1}\id_{\{a>0\}} \rmd a$, where $k$ is some non-increasing right-continuous function such that $\int_0^\infty \min\{a,1\} k(a)a^{-1}\rmd a<\infty$. 

Noting that $k$ defines a measure $\rho_k$ on $(0,\infty)$ by $\rho_k \big((a,\infty)\big):=k(a)$, $a>0$, we can rewrite $\upsilon$ as $\upsilon(\rmd a)=\rho_k  \big((a,\infty)\big) a^{-1}\rmd a$. It turns out that $\rho_k$ defines the L\'evy measure of another non-negative id distribution \cite[Theorem 17.5]{Satolevyprocess}. Thus, there exists a non-negative and non-decreasing L\'evy process, also called L\'evy subordinator, $\lc L^{(k)}_t\rc_{t\geq 0}$ with univariate L\'evy measure $\rho_k$ and the self-decomposable distribution with L\'evy measure $\upsilon$ can be recovered from $L^{(k)}$ as the distribution of the infinitely divisible random variable $\int_0^\infty \exp(-s)\rmd L^{(k)}_s$. Due to this representation $L^{(k)}$ is called the Background Driving L\'evy process (BDLP) of the self-decomposable distribution with L\'evy measure $\upsilon$. \cite{JEANBLANC2002} show that not only the self-decomposable distribution associated with L\'evy measure $\upsilon$, but also the associated self-similar subordinator $H$ can be recovered from (the law of) $L^{(k)}$ by 
\begin{align}
    \hto := \lc \int_{-\log\lc \min\{t; 1\}\rc}^\infty \exp(-s)\rmd L^{(k,1)}_s+\int_0^{\log\lc \max\{1,t\}\rc} \exp(s)\rmd L^{(k,2)}_s\rc_{t\geq 0} ,\label{selfsimsubfrombldp}
\end{align} 
where $L^{(k,i)}_{i=1,2}$ denote two iid copies from $L^{(k)}$. This particular representation of the self-similar subordinator $H$ allows us to derive a representation of its associated path L\'evy measure in terms of the L\'evy measure of the BDLP.

\begin{lem}[L\'evy measure of self-similar subordinator via L\'evy measure of BDLP]
\label{lemlevymeasureselfsimbdlp}
Let $\hto$ denote a self-similar subordinator with index $1$ and let $L^{(k)}$ denote the BDLP associated to $H_1$. The L\'evy measure of $H$ can be expressed as
\begin{align}
    \nu(A)&= \int_{(0,\infty)^2} \id_{ \big\{  as\id_{\{ \cdot \geq s\}}  \in A  \big\}} s^{-1}\rmd s\otimes \rho_k (\rmd a)  ;\ A\in\bc({\mathbf{M}}), \label{pathlevymeasureselfsimBDLP}
\end{align}
where $\rho_k$ denotes the L\'evy measure of $L^{(k)}$.
\end{lem}

Assuming that $k(\cdot)=\rho_k\big((\cdot,\infty)\big)$ is differentiable we obtain the following corollary.

\begin{cor}[L\'evy measure of self-similar subordinator via density]
\label{lemlevymeasureselfsilevysystemdensity}
Let $\hto$ denote a self-similar subordinator with index $1$ associated to the self decomposable distribution with L\'evy measure $\rmd\upsilon=k(a)a^{-1}\rmd a$. If $k$ is differentiable, then the path L\'evy measure $\nu$ of $\hto$ is given by 
$$ \nu(A)=-\int_{(0,\infty)^2} \id_{ \big\{  a\id_{\{\cdot\geq s\}}\in A\big\}} k^\prime\lc as^{-1}\rc s^{-2} \rmd s\rmd a,\ A\in\bc({\mathbf{M}}) .$$
\end{cor}

Having determined the L\'evy measure of a self-similar subordinator we can express the exponent measure of the associated exchangeable Sato-frailty sequence by (\ref{exponentmeasuremaxidsequence}) as 
$$\bar{\mu}(A)=-\int_0^\infty\int_0^\infty \otimes_{i=1}^\infty \lc 1-\exp\lc-a\rc \id_{\{\cdot \geq s\}}\rc \lc \big\{ (h_i)_{i\in\N}\in A\big\} \rc k^\prime\lc as^{-1}\rc s^{-2} \rmd a\rmd s ,$$
assuming that $k(\cdot)=\rho_k\big((\cdot,\infty)\big)$ is differentiable.
Thus, to apply Algorithm \ref{alg1}, we need to simulate a PRM with intensity (\ref{intfiniteprm}) expressible as
\begin{align*}
    -\int_0^\infty \int_0^{1/c} \otimes_{i=1}^\infty \lc 1-\exp\lc-a\id_{\{\cdot\geq s\}}\rc\rc  \lc \big\{ f\in\ \cdot\ \big\}\ \bigg\vert\ f(n)\leq\frac{1}{c}   \rc  k^\prime\lc as^{-1}\rc s^{-2}\lc 1-\exp(-a)\rc    \rmd s\rmd a.
\end{align*}
The only difficulty in the simulation of this PRM is the simulation of the random vector $(A^{(c)},S^{(c)})$ with joint distribution $-\id_{\{s\in(0,1/c)\}}\id_{\{0<a\}}C_c^{-1} k^\prime\lc as^{-1}\rc s^{-2}\lc 1-\exp(-a)\rc  \rmd a\rmd s$, where $C_c$ was defined in (\ref{eqnconstantcc}). However, it is easy to see that the marginal density of $A^{(c)}$ is given by 
\begin{align}
    g_{A^{(c)}}(a)=\id_{\{ a>0\}}C_c^{-1} a^{-1}k(ac)\lc 1-\exp(-a)\rc \label{densitygA}
\end{align}
and that the conditional density of $S^{(c)}$ given $A^{(c)}$ is given by 
\begin{align}
    g_{S^{(c)}|A^{(c)}}(s)= -\id_{\{s\in(0,1/c)\}}k^\prime\lc A^{(c)}s^{-1}\rc s^{-2}\lc  \lc A^{(c)}\rc^{-1}k\lc A^{(c)}c\rc \rc^{-1} \label{densitygSgivenA}.
\end{align}
Thus, a sample of $(A^{(c)},S^{(c)})$ can be generated by first sampling a random variable $A^{(c)}$ with density $g_{A^{(c)}}$ and then, given $A^{(c)}$, sampling a random variable $S^{(c)}$ according to the conditional density $g_{S^{(c)}|A^{(c)}}$. Altogether, this implies that we can sample from a PRM with intensity (\ref{intfiniteprm}) by the following procedure:
\begin{enumerate}
    \item[(i)]  Draw a random variable $M\sim Poi(C_{c})$.
    \item[(ii)]  For $1\leq i\leq M$ draw a random variable $A^{(c)}_i$ according to the density $g_{A^{(c)}}$ and, conditioned on $A^{(c)}_i$, draw a random variable $S^{(c)}_i$ according to the density $g_{S^{(c)}|A^{(c)}_i}$.
    \item[(iii)]  For each pair $(A^{(c)}_i,S^{(c)}_i)$ draw an iid sequence $\bm f^{(A^{(c)}_i,S^{(c)}_i)}:=\lc f^{(A^{(c)}_i,S^{(c)}_i)}_j\rc_{j\in\N}$ with marginal distribution $\big( 1-\exp\lc-A^{(c)}_i\rc\big) \delta_{S^{(c)}_i}+ \exp(-A^{(c)}_i)\delta_\infty $ and set $f^{\lc A^{(c)}_i,S^{(c)}_i\rc}_n=S^{(c)}_i$.
    \item[(iv)]  The PRM with intensity (\ref{intfiniteprm}) is given by $\sum_{i=1}^M \delta_{ 1/\bm f^{\big( A^{(c)}_i,S^{(c)}_i\big)}}$.
\end{enumerate}
Therefore, Algorithm \ref{alg1} can be employed to generate exchangeable Sato-frailty sequences if the associated function $k$ is differentiable. When $k$ is not assumed to be differentiable, one still obtains a representation of $\bar{\mu}$ in terms of $\rho_k$ via Lemma \ref{lemlevymeasureselfsimbdlp}. However, the simulation procedure of a PRM with intensity (\ref{intfiniteprm}) slightly changes and requires the simulation of a random vector with density  $\id_{\{s\in(0,1/c)\}}\id_{\{0<a\}}C_c^{-1}s^{-1}(1-\exp(-as))\mathrm{d} s\mathrm{d} \rho_k(a)$, which cannot be conducted without assuming further regularity properties of $\rho_k$.

\begin{rem}[Sampling of the densities $g_{A^{(c)}}$ and $g_{S^{(c)}|A^{(c)}}$]
If the function $k$ is known analytically one can use rejection sampling to obtain (exact) samples from random variables with density $g_{A^{(c)}}$, see e.g.\ \cite[p.\ 235 ff.]{maischerer2017simulating} for more details on rejection sampling. To simulate a random variable with density $g_{S^{(c)}|A^{(c)}}$ one could also use rejection sampling if $k^\prime$ is known analytically, but one should notice that its associated distribution function is given by $G_{S^{(c)}|A^{(c)}}(s)=k(A^{(c)}s^{-1})A^{(c)} k\lc A^{(c)}c\rc^{-1}\id_{\{0<s<1/c\}}$. Therefore, rejection sampling and the (numerical) inverse transform sampling method may be used to sample random variables with conditional density $g_{S^{(c)}|A^{(c)}}$.
\end{rem}

\begin{rem}[Extension to exchangeable exogenous shock models]
\cite{idprocessesrosinski2018} shows that an id-process $H$ is additive if and only if its path L\'evy measure is concentrated on one-time jump functions of the form $a\id_{\{\cdot\geq s\}}$. In many cases, the univariate L\'evy measure $\upsilon_t$ of an extended real-valued additive process at time $t$ is absolutely continuous w.r.t.\ to the Lebesgue measure, meaning that $\upsilon_t(\rmd a)=k(a,t)\rmd a$. If $k(a,\cdot)$ is differentiable on $(0,\infty)$ for almost all $a$ one can obtain a similar expression of the path L\'evy measure of an additive process as in Corollary \ref{lemlevymeasureselfsilevysystemdensity}.  It can be easily checked that the image measure of the map $\big(  (0,\infty) \times (-\infty,\infty),k^\prime(a,s)\rmd a\rmd s \big)\to {\mathbf{M}};\ (s,a)\mapsto a\id_{\{\cdot\geq s\}}$ satisfies the conditions of \cite[Theorem 2.8]{idprocessesrosinski2018} and thus defines a valid L\'evy measure of a driftless additive process $\tilde{H}$. We obtain that 
\begin{align*}
    \e\lk \exp\lc iz\tilde{H}_t\rc\rk&=\exp\lc ib(t)+\int_{\R\setminus \{0\}} \int_0^\infty \lc \exp\lc iza\id_{\{t\geq s\}}\rc -1-iza\id_{\{t\geq s\}}\id_{\{a\leq 1\}}  \rc k^\prime (a,s) \rmd s\rmd a \rc\\
    &=\exp\lc ib(t)+  \int_{\R\setminus \{0\}} \int_0^t \lc  \exp\lc iza\rc -1-iza\id_{\{a\leq 1\}} \rc  k^\prime (a,s)  \rmd s\rmd a \rc\\
   &= \exp\lc ib(t)+ \int_{\R\setminus \{0\}}  \lc \exp\lc iza\rc -1-iza\id_{\{a\leq 1\}} \rc  k(a,t) \rmd a \rc= \e\lk \exp\lc izH_t\rc\rk ,
\end{align*}
since $\lim_{s\to 0} k(a,s)=0$ for almost all $a\not = 0$ by the stochastic continuity of additive processes. Thus, $\tilde{H}$ and $H$ are identical in distribution, given that additive processes are uniquely determined by their marginal distributions. Therefore, the path L\'evy measure of $H$ is given by 
$$\nu (A) =\int_{\R\setminus \{0\}} \int_0^\infty \id_{\big\{ a\id_{\{\cdot\geq s\}}\in A  \big\}} k^\prime(a,s) \rmd a\rmd s,\ A \in \bc\lc D\big([0,\infty)\big)\rc ,$$
where $D\big([0,\infty)\big)$ denotes space of real-valued c\`adl\`ag functions.
Similar to Sato-frailty sequences, this allows to sample the exchangeable sequences associated to an additive subordinator with univariate L\'evy measure $\upsilon_t(\rmd a)=\id_{\{a>0\}}k(a,t)\rmd a$ by repeatedly drawing random vectors $(A^{(c)},S^{(c)})$ and conditionally iid sequences.
\end{rem}

\begin{rem}[Extension to exchangeable max-stable sequences]
Stochastic processes $H$ which satisfy $H(nt)\sim \sumn $ $H^{(i)}(t)$ for all $n\in N$ and iid copies $\lc H^{(i)}\rc_\iinn$ of $H$ are called strongly infinitely divisible w.r.t.\ time (strong-idt). \cite{maischerer2014exmsmve} have shown that the max-id sequence $\bmX$ corresponding to a strong-idt process in (\ref{correspexminidseqidprocess}) is max-stable, meaning that its marginal distributions can be obtained as a limit distribution of scaled maxima of iid random vectors. The general form of the exponent measure of an exchangeable max-stable sequence $\bmX$ has been derived in \cite{mai2020canonicalspectral}. However, it still involves the law of a stochastic process and does not directly translate into a simple simulation procedure for Algorithm \ref{alg1}.  \cite{BernhartMaiScherer2015,maischerer2019strongidtubordinators} have investigated subfamilies of strong-idt processes with the particular representation $\lc H_f(t)\rc_\tnn=\lc \int_0^\infty f(s/t) \rmd L_s\rc_\tnn$, where $\lc L_t\rc_{\tnn}$ denotes a L\'evy subordinator and $f$ denotes a non-negative non-increasing left-continuous function. 
\cite{maischerer2019strongidtubordinators} provide exact simulation algorithms for the $d$-dimensional margins of the corresponding exchangeable max-stable sequence in the particular case $f(s)=\lim_{u\nearrow s}-\log(F(u))$ for some distribution function $F$, whereas the models in \cite{BernhartMaiScherer2015} could only be simulated when $L$ is a compound Poisson process.
Lemma \ref{lemlevymeasurestochasticintegration} yields a rather simple representation of the exponent measure of the exchangeable max-stable sequence associated to $H_f$ in terms of the path L\'evy measure of $L$, which can be translated into a representation of the exponent measure of $\bmX$ as a mixture of iid sequences in terms of the law of a random vector $(A^{(c)},S^{(c)})$. Thus, similar to Sato-frailty sequences, the examples from \cite{BernhartMaiScherer2015,maischerer2019strongidtubordinators}  can essentially be simulated by repeated simulations of a random vector $(A^{(c)},S^{(c)})$ and conditionally iid sequences.
\end{rem}

\section{Illustration of the simulation algorithm}
\label{sectionsimulation}

In this section we exemplarily demonstrate how Algorithm \ref{alg1} can be used to simulate a Sato-frailty sequence in practice. We rather aim at providing a proof-of-concept like exposition than to fine-tune the presented example to its most efficient simulation procedure. We chose the Inverse Gaussian
(IG) distribution as our guiding example. The IG distribution is known to be self-decomposable \cite{halgreen1979self} and its L\'evy measure is given by
\begin{align*}
    \upsilon (\rmd a)= \id_{\{a>0\}} \frac{\delta}{\sqrt{2\pi} } a^{-\frac{3}{2}} \exp\lc -\frac{\gamma^2 a}{2} \rc \rmd a, \text{ where }\delta,\gamma>0.
\end{align*}
Therefore, the L\'evy measure of the associated self-similar subordinator is characterized by the function $$k(a)=\frac{\delta}{\sqrt{2\pi} } a^{-1/2} \exp\lc -\gamma^2 a/2 \rc,\ a>0.$$ 
One should note that a simulation of the associated exchangeable Sato-frailty sequence $\bmX$ via its stochastic representation (\ref{correspexminidseqidprocess}) would either require the simulation of the whole path of the infinitely active BDLPs in (\ref{selfsimsubfrombldp}) or the direct simulation of the increments of the associated self-similar subordinator. However, the simulation of the whole path of the BDLPs cannot be practically achieved nor can the increments of the associated self-similar subordinator be efficiently simulated, since their law cannot be easily characterized. Thus, Algorithm \ref{alg1} can be seen as the natural choice regarding the simulation of $\bmX$. It is quite easy to see that the associated random vector $(A^{(c)},S^{(c)})$ can be simulated by rejection sampling for the random variable $A^{(c)}$ with density (\ref{densitygA}) and inverse transform sampling for the random variable $S^{(c)}|A^{(c)}$ with conditional density (\ref{densitygSgivenA}).
We have simulated the corresponding sequence $\bmX$ for various values of $(\delta,\gamma)$ and report our results in terms of scatterplots of the associated copula $C(u_1,\ldots,u_d)=\p\lc F_1(X_1)\leq u_1,\ldots , F_d(X_d)\leq u_d\rc$, since copulas do not depend on the marginal distribution of $\bmX$. In particular, the associated copula is independent of the index of self-similarity of the associated self-similar subordinator. The copula corresponding to $\bmX$ has been analytically derived in \cite{maischenkscherer2017twonovel} and is given by
$$ C(u_1,\ldots,u_d)= \prodd \exp\lc \delta\gamma \lc \sqrt{1+i\lc \frac{\log\lc u_{[i]}\rc}{\delta\gamma} +1\rc^2 -i }-\sqrt{1+(i-1)\lc \frac{\log\lc u_{[i]}\rc}{\delta\gamma}+1 \rc^2 -(i-1) }\rc \rc ,$$
where $u_{[i]}$ is defined as the $i$-th oder statistic of $(u_1,\ldots,u_d)$. Thus, the associated copula only depends on $\delta\gamma$. Fig.\ \ref{figurecopulainvgaus} provides the empirical copula plots for dimensions $d\in\{2,3\}$ and $\delta\gamma\in\{1/10,2,10\}$ and shows that the margins of $\bmX$ become less dependent with increasing $\delta\gamma$.
\begin{figure}%
    \centering
    \subfloat{\includegraphics[width=6cm]{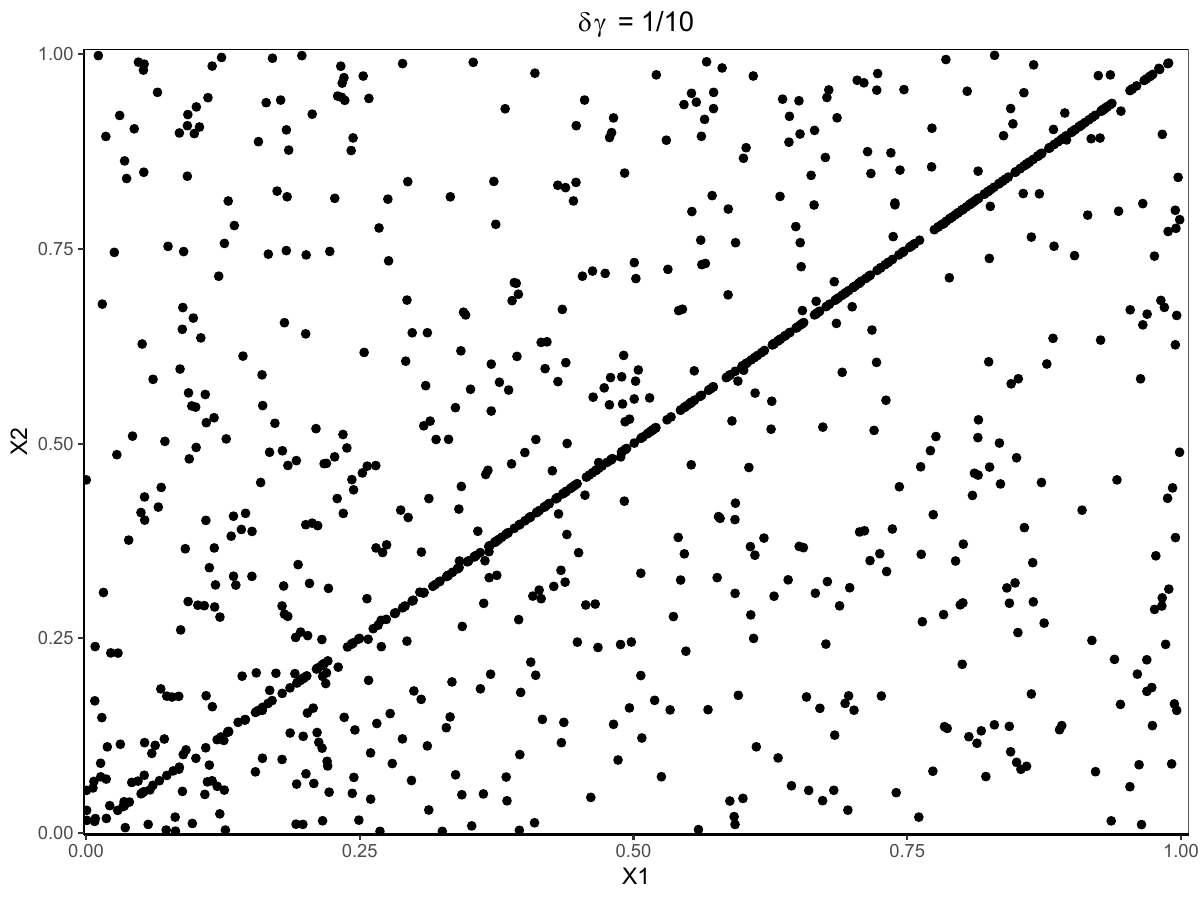} }%
    \subfloat{\includegraphics[width=7cm]{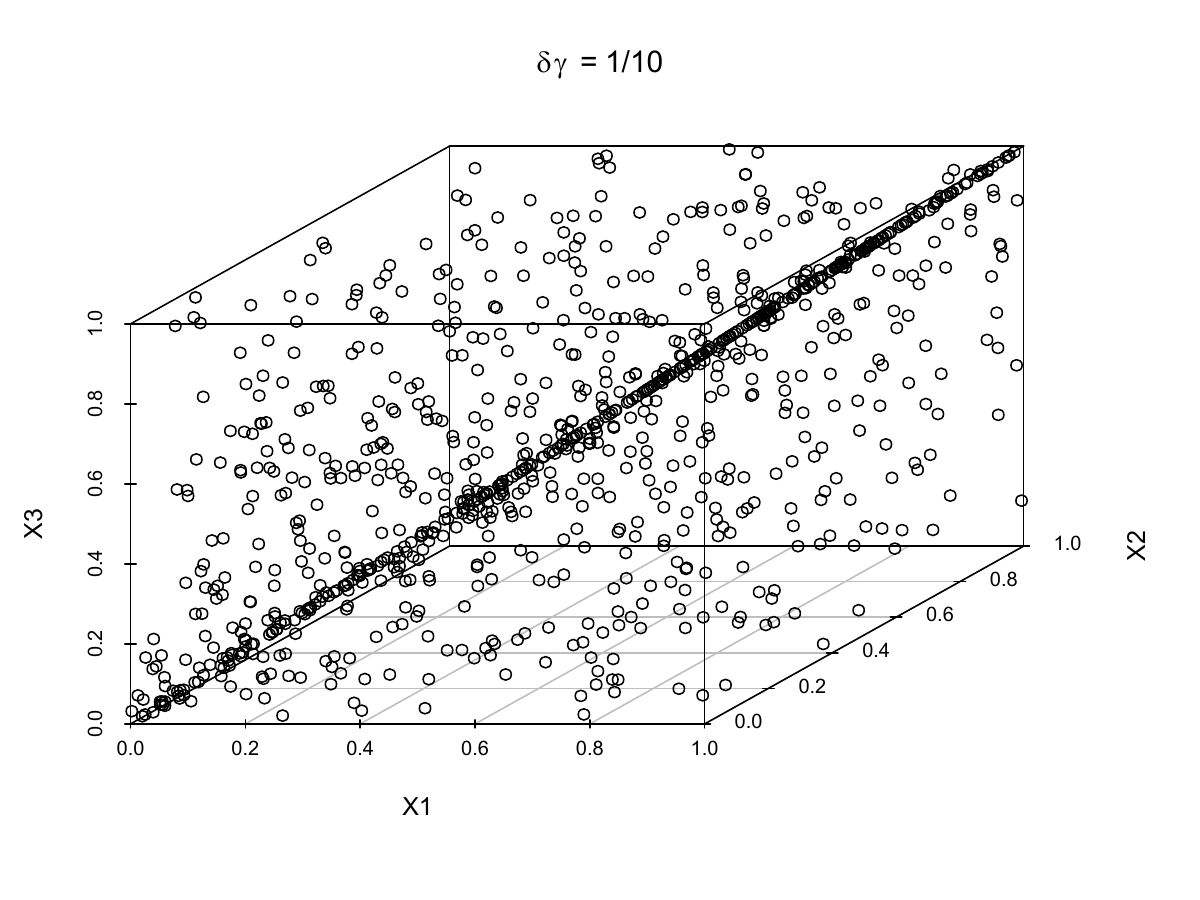} }%

    \subfloat{\includegraphics[width=6cm]{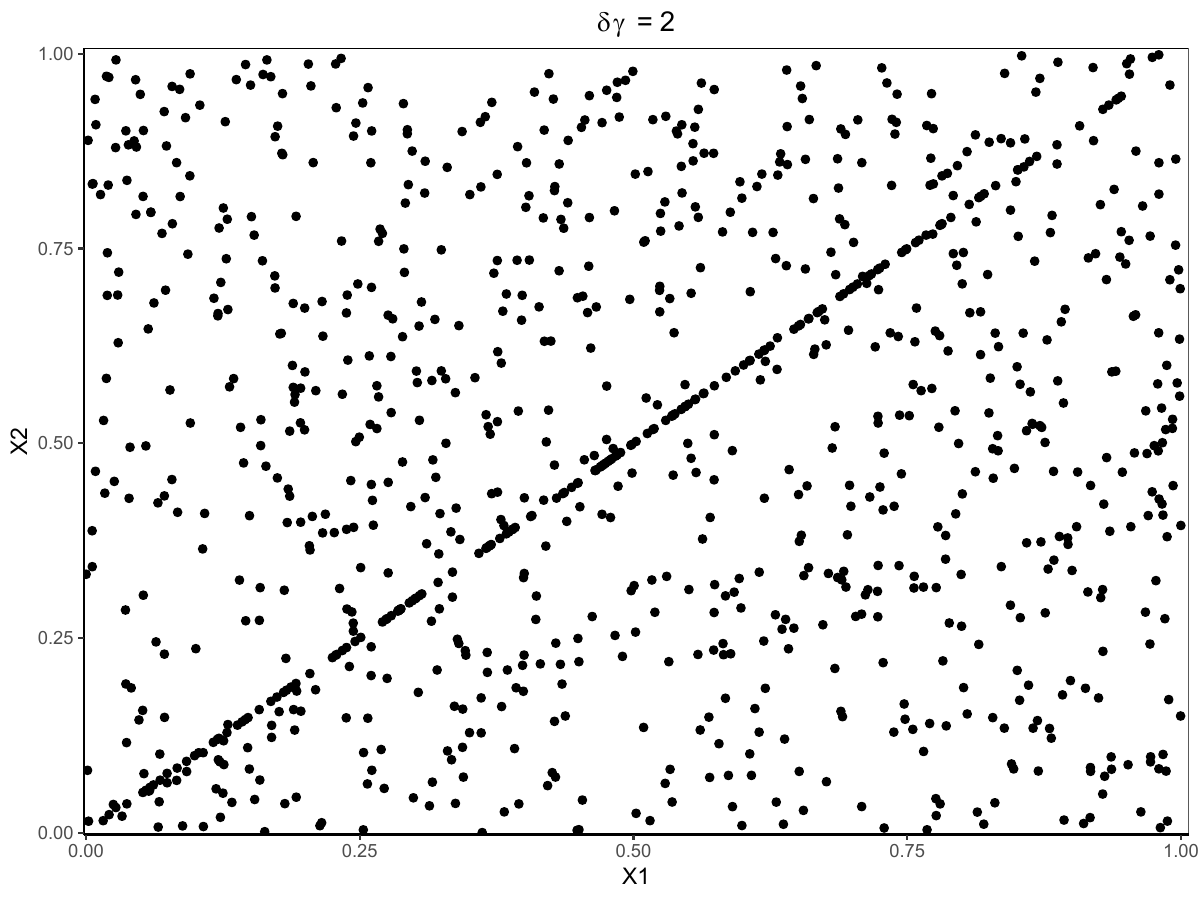} }%
  \subfloat{\includegraphics[width=7cm]{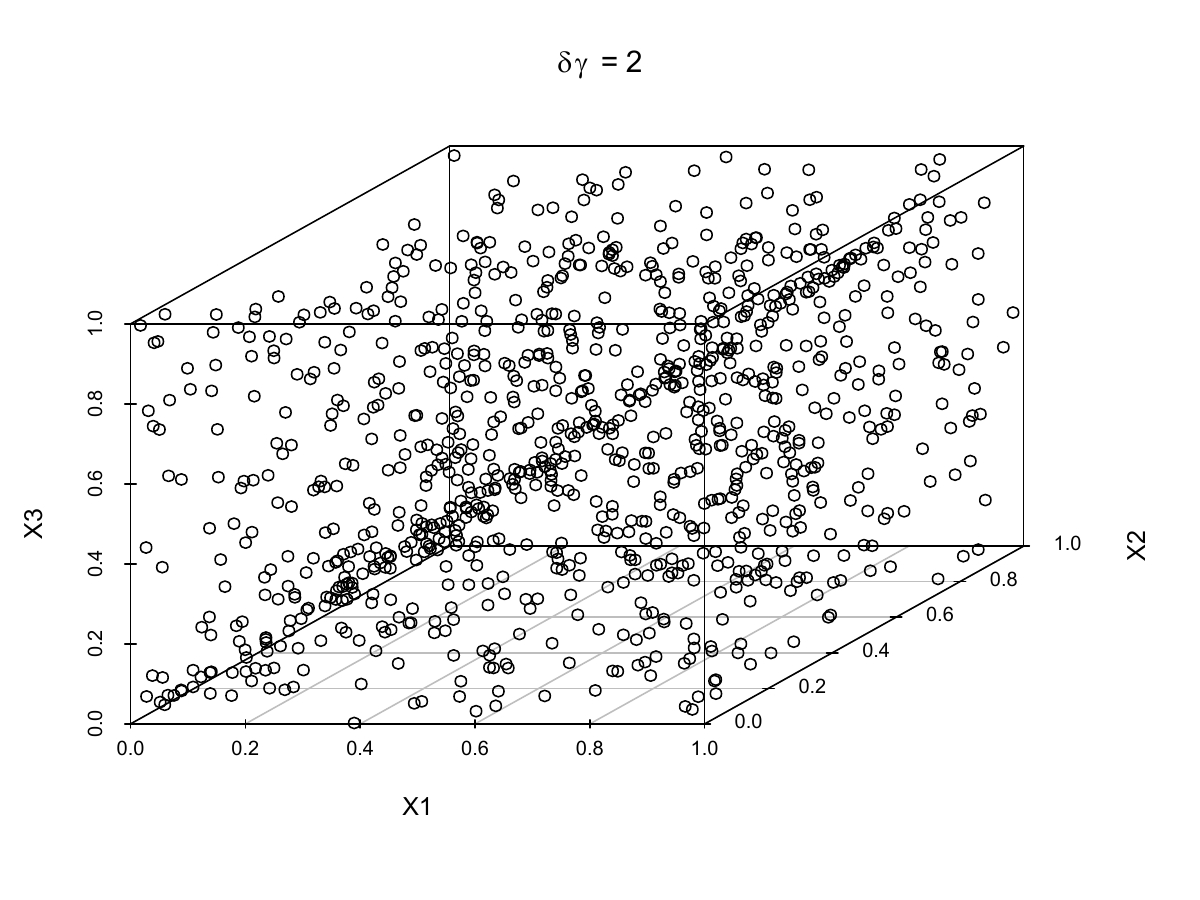} }%

     \subfloat{\includegraphics[width=6cm]{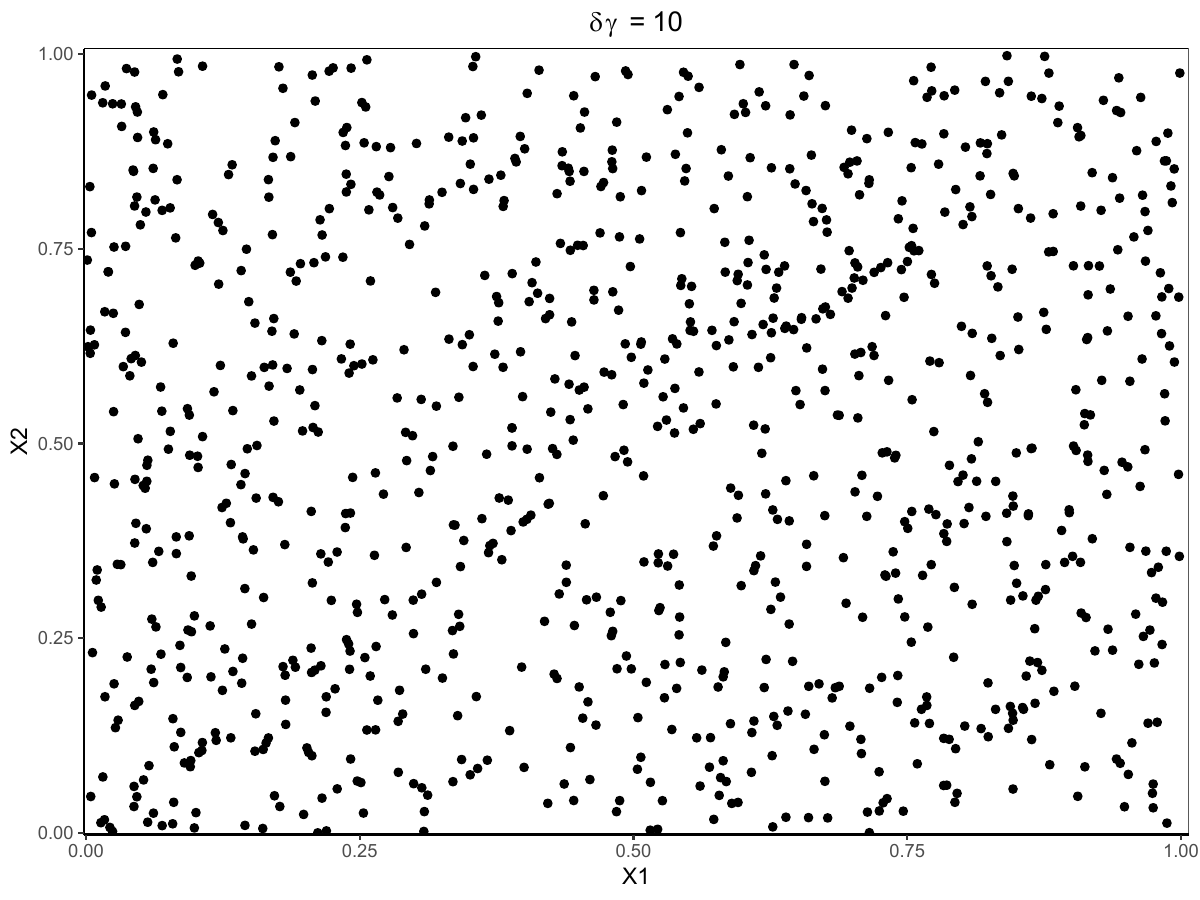} }%
       \subfloat{\includegraphics[width=7cm]{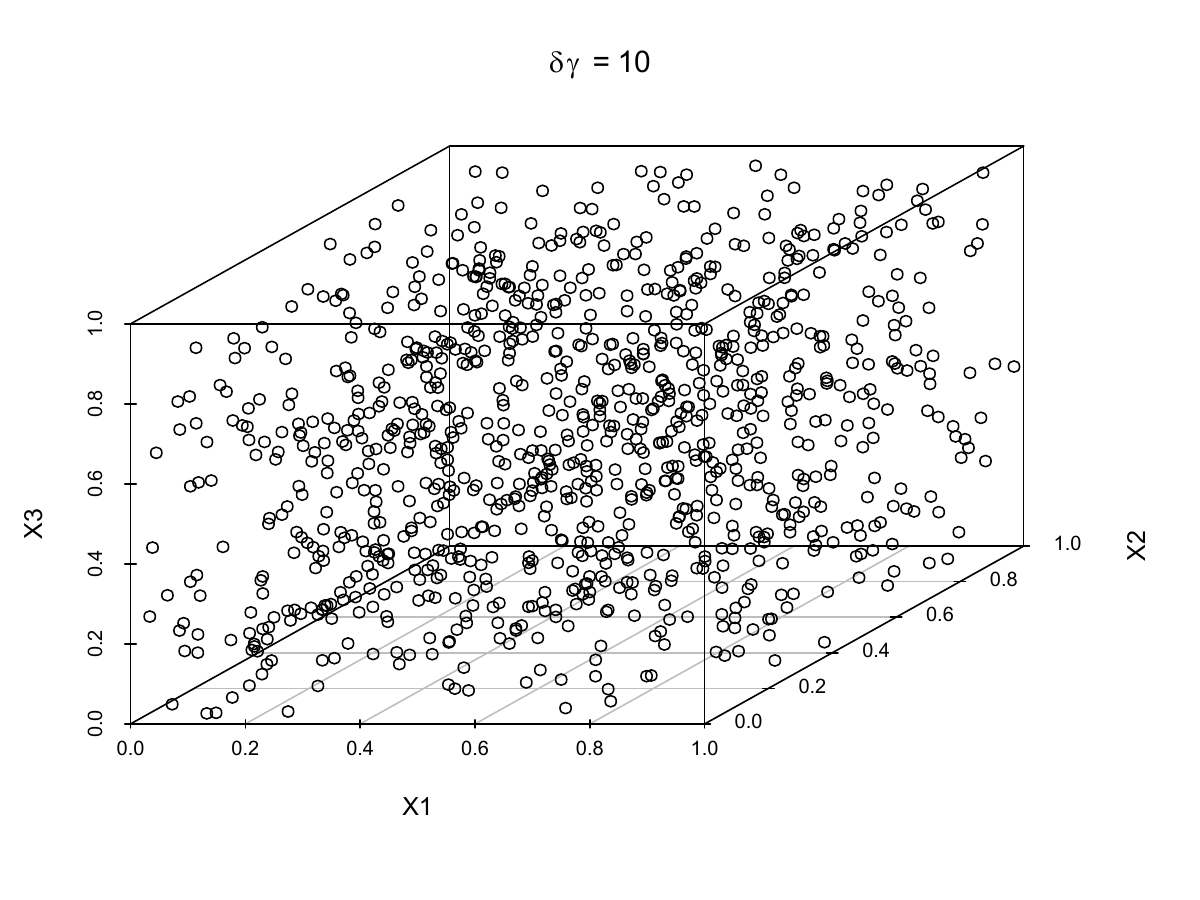} }%
    \caption{Empirical copula plots of the two- and three-dimensional margins of $1000$ samples of the exchangeable Sato-frailty sequences associated to the IG distribution with parameters $\delta\gamma\in\{1/10,2,10\}$ (top, middle, bottom). }%
    \label{figurecopulainvgaus}%
\end{figure}

To empirically verify the results of Theorem \ref{thmcomplexityofalg1}, Fig.\ \ref{fignumsimfct} shows the average and standard deviation of the number of simulated sequences to produce a sample of the exchangeable Sato-frailty sequences associated to the IG distributions with $\delta\gamma\in\{1/10,2,10\}$ and various dimensions $d\in\{1,5,10,25,50,100,250,500,1000,2500,5000,10000\}$ over $500$ repetitions. Note that we applied Algorithm \ref{alg1} in accordance with assumption (\ref{assumptionsimprm}) as follows: After simulating the random vectors $\lc \lc A_i^{(c)},S_i^{(c)}\rc \rc_{1\leq i\leq M}$ where $M\sim Poi(C_c)$ via the (conditional) densities (\ref{densitygA}) and (\ref{densitygSgivenA}), we can simulate the sequences associated to the $\lc \lc A_i^{(c)},S_i^{(c)}\rc \rc_{1\leq i\leq M}$ in increasing order of the $S_i^{(c)}$. It is easy to see that this procedure allows to simulate a PRM with intensity $\mu$ according to assumption (\ref{assumptionsimprm}). Therefore, the expected number of simulated sequences is equal to $d$, since the one-dimensional marginal distributions of exchangeable Sato-frailty sequences are continuous. Fig.\ \ref{fignumsimfct} empirically verifies this result, showing that the average number of simulated functions over $500$ repetitions is always close to $d$, independently of $\delta\gamma$ and $d$. Interestingly, the standard deviation of the number of simulated functions seems to depend on $\delta\gamma$. Thus, the example shows that, even though the expected number of simulated sequences (or functions) in Algorithm \ref{alg1} is always equal to $d$ when the margins of $\bmX$ follow a continuous distribution, its standard deviation may depend on properties of the associated continuous max-id process. 

\begin{figure}
    \centering
    \subfloat{\includegraphics[width=7cm]{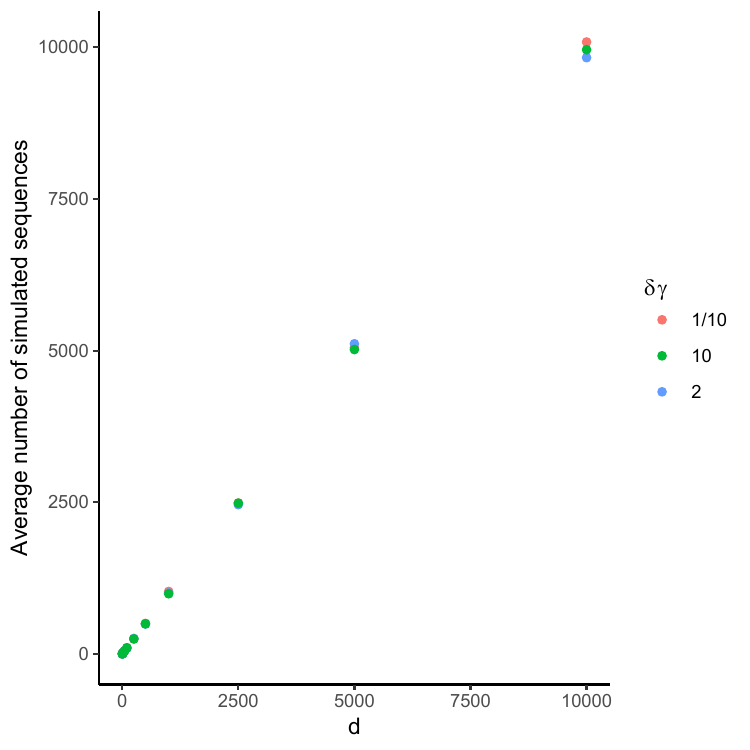} }%
    \subfloat{\includegraphics[width=7cm]{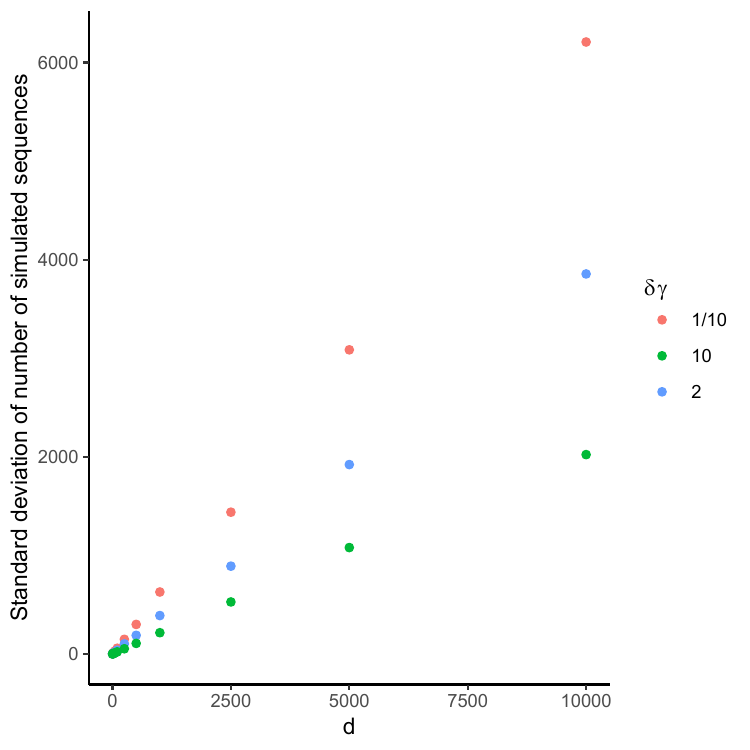} }
    \caption{The average (left) and standard deviation (right) of the number of simulated sequences to obtain one sample of the exchangeable Sato-frailty sequences associated to the IG distribution for various values of $\delta\gamma$ and dimensions $d$ over $500$ repetitions.}
    \label{fignumsimfct}
\end{figure}


\section{Discussion}
We have developed a simulation algorithm for the exact simulation of continuous max-id processes based on their associated exponent measure. Our algorithm is solely based on the ability to simulate PRMs with finite intensity measures which facilitates its wide applicability. The complexity of the algorithm has been characterized in terms of the expected number functions that need to be simulated to obtain a sample of the associated continuous max-id process. Exemplarily, we have derived the exponent measure of an exchangeable Sato-frailty sequence and demonstrated the applicability of our algorithm theoretically and in practice, thereby providing the first exact simulation algorithm for high dimensional samples of this family. We have sketched how the algorithm for exchangeable Sato-frailty sequences can be generalized to certain families of exogenous shock models and max-stable sequences without increasing its practical complexity. This enables the possibility to consider the construction principle of exchangeable max-id sequences in (\ref{stochrepexchseq}) by means of its desired analytical properties, without the need of having a suitable simulation algorithm for the associated stochastic process at hand. \ref{sectionsimmaxidvector} discusses an alternative simulation algorithm for max-id random vectors, thereby accounting for the natural geometric descriptions of many known families of finite dimensional exponent measures. An application of our algorithm to a continuous max-id process with uncountable index set is left for future research. We think that the proposed simulation algorithm may be extended to upper semicontinuous max-id processes with obvious modifications, however the technical details need to be carefully worked out and are also left for future research.

\appendix

\section{Exact simulation of max-id random vectors}
\label{sectionsimmaxidvector}
This section is devoted to the exact simulation of a max-id random vector $\bmX\in [0,\infty)^d$. Since $\bmX$ can be viewed as a continuous max-id process on $T=\{1,\ldots,d\}$ Algorithm \ref{alg1} is, in principle, applicable to every max-id random vector.  However, the exponent measure of a max-id random vector is often more easily described by exploiting the specific geometric structure of $[0,\infty)^d$. For example, a common representation of an exponent measure of a max-id random vector is the scale mixture of a probability distribution on the non-negative unit sphere of some norm on $\R^d$. Two famous representatives of this class of exponent measures are the exponent measures of max-stable random vectors with unit Fr\'echet margins \cite[Chapter 5]{resnickextreme2013} and random vectors with reciprocal Archimedean copula \cite{genestreciparchim2018,maiexactsimreciparchim2018}, see Example \ref{examplescalemixturesimplexponentmeasure} below. In both cases, a simulation of $\bmX$ via Algorithm \ref{alg1} would require to deviate from the natural description of the exponent measure to simulate a PRM with intensity $\id_{\{ f(t)\geq c\}}\rmd\mu(f)$. Thus, there is a need to adapt Algorithm \ref{alg1} to exploit the natural structure of many exponent measures of max-id random vectors. Again, to simplify the theoretical developments, we can w.l.o.g.\ assume that $h_\bmX=\bm 0$.

Our goal is to generalize the algorithms of \cite{schlathermodelsmaxstable2002,dombryengelkeoestingexactsimmaxstable2016,maiexactsimreciparchim2018} to max-id random vectors. Similar to Algorithm \ref{alg1} we will only simulate those atoms of the PRM $N=\sum_{i\in\N} \delta_{\bmx_i}$ with intensity $\mu$ which may be relevant to determine $\max_{i\in \N} \bmx_i=\bmX$. We start by dividing $[0,\infty)^d$ into disjoint ``slices'' $\Ss_n$ of finite $\mu$-measure. Then, assuming that we can simulate finite PRMs $N_n$ with intensities $\mu\lc \cdot\cap \Ss_n\rc$, we iteratively simulate the $N_n$ until a stopping criterion is reached. To obtain a valid stopping criterion we need to assume that the slices $\Ss_n$ eventually approach $\bm 0$, which is mathematically described as eventually residing in an open ball around $\bm 0$. This will force the algorithm to stop after finitely many steps, since atoms of the PRM $N$ in a neighborhood of $\bm 0$ eventually cannot contribute to the maximum of the already simulated points. 

\begin{ex}
Assume that the atoms of the PRM $N$ are given by the points in Fig.\ \ref{figprmdim2}. A possible execution of our algorithm could be described as follows: 
In the first step, all atoms of the PRM above the blue line are simulated, which corresponds to $\Ss_1=\{\bmx\mid x_1+x_2\geq 0.6 ,x_1,x_2\geq 0\}$. Since the pointwise maximum of atoms of $N$ above the solid-blue line is not above the dashed-blue line $\{\bmx\mid \min_{i=1,2} x_i= 0.6\}$ there are possibly some atoms of $N$ which can contribute to the pointwise maximum of $N$ and which have not yet been simulated. Therefore, in a second step, we simulate all points between the solid-blue and the solid-green line, which corresponds to $\Ss_2=\{\bmx\mid 0.3\geq x_1+x_2< 0.6 ,x_1,x_2\geq 0\}$. The red triangle denotes the pointwise maximum of the simulated points above the solid-green line. Since it is above the dashed-green line $\{\bmx\mid \min_{i=1,2} x_i= 0.3\}$ it is the maximum of the PRM $N$ and the algorithms stops.
\begin{figure}
        \centering
    \includegraphics[scale=0.4]{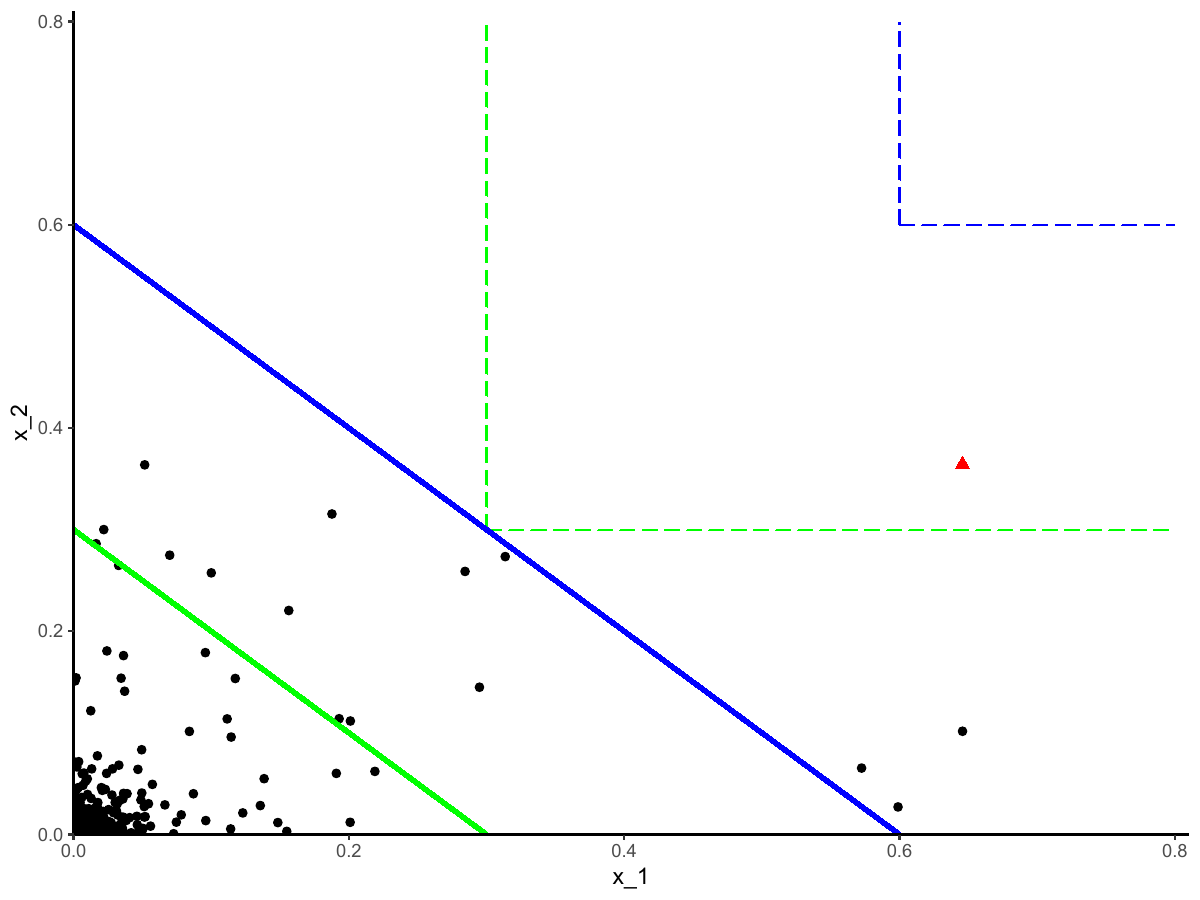}
    \caption{\footnotesize  Illustration of Algorithm \ref{alg2}.}
    \label{figprmdim2}
\end{figure}

\end{ex}

Let us formalize the proposed algorithm. Let $B^\infty_r(\bmx):=\big\{\bm y\in\R^d\mid \max_{\leqd} \vert y_i-x_i\vert <r\big\}$ denote the open ball of radius $r$ around $\bmx$ w.r.t.\ the supremum norm. We assume that we can simulate from finite PRMs $N_n$ with intensities $\mu\lc\cdot \cap \Ss_n\rc$, where $(\Ss_n)_{n\in\N}$ is a sequence of disjoint sets which satisfy 
    \begin{itemize}
        \item[(i)]  $\mu\lc \Ss_n\rc <\infty$ for all $n\in\N$,
        \item[(ii)]  $\bigcup_{n=1}^\infty \Ss_n =[0,\infty)^d\setminus \{\bm 0\}$,
        \item[(iii)]  for all $r>0$ there exists $m(r)\in\N$ such that $\bigcup_{n\geq m} \Ss_n\subset B^\infty_r\lc \bm 0\rc$. 
    \end{itemize} 
Under these conditions on $S_n$ we can propose the following algorithm for the exact simulation of max-id random vectors with exponent measure $\mu$.\\

\begin{algorithm}[H]
\label{alg2}
\SetAlgoLined \LinesNumbered
\KwResult{Unbiased sample of $(X_1,\ldots,X_d)$ with exponent measure $\mu$. }
Set $\mu_j\lc \cdot \rc=\mu\lc \cdot \cap  \big\{ \bmx\in [0,\infty)^d\setminus\{\bm 0\} \mid x_j>0,\ x_k=0,\ k<j, k\in J_0 \big\}\rc ,\ j\in J_0$\;
Set $\Tilde{\mu}\lc \cdot \rc=\mu\lc \cdot \cap  \big\{  \bmx\in [0,\infty)^d\setminus\{\bm 0\} \mid x_j=0 , j\in J_0 \big\}\rc$\;
\For{$j\in J_0$}{
Simulate a finite PRM $N_j$ with intensity $\mu_j$ and set $\hat{\bmX}_j=\max_{\bmx\in N_j}\bmx$\;
}
Set $\tilde{\bmX}=\bm 0$ and $n=1$\;
\While{there is no $r>0$ such that $\bigcup_{m\geq n} \Ss_{m}\subset B^\infty_r\lc \bm 0\rc $ \textbf{and} $\min_{i\not\in J_0} \Tilde{X}_i\geq r$}{
Simulate the finite PRM $\tilde{N}_n$ with intensity $\tilde{\mu}\lc \cdot\cap \Ss_n\rc$\;
Set $\tilde{\bmX}=\max \big\{ \max_{\bmx^{(n)}\in \tilde{N}_n} \bmx^{(n)},\tilde{\bmX}\big\}$\;
Set $n=n+1$\;
}

Set $\hat{\bmX}=\max \big\{ \max_{j\in J_0} \hat{\bmX}_j,\tilde{\bmX}\big\}$\;
\Return{$\hat{\bmX}$}
 \caption{Exact simulation of max-id random vector with vertex $\bm 0$} 
\end{algorithm}

\begin{thm}[Validity of Algorithm \ref{alg2}]
\label{thmalg2}
Algorithm \ref{alg2} stops after finitely many steps and its output is a max-id random vector $\bmX=(X_1,\ldots,X_d)$ with exponent measure $\mu$.
\end{thm}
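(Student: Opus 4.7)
The plan is to verify (i) almost-sure termination of Algorithm \ref{alg2} and (ii) that the law of the output $\hat{\bmX}$ agrees with that of a max-id random vector with exponent measure $\mu$. Since each $\mu_j$ with $j\in J_0$ is a finite measure, the simulations of the $\hat{\bmX}_j$ terminate trivially, and the work lies in analysing the while loop and the final superposition.

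For termination, I would first establish that $\tilde{\mu}(\{\bmx : x_i > 0\}) = \infty$ for every $i\notin J_0$. Because $\p(X_i>0)=1$ we have $\mu(\{x_i>0\})=\infty$, whereas
\[
\mu\bigl(\{x_i>0\}\cap \{\exists\, j\in J_0 : x_j>0\}\bigr)\leq \sum_{j\in J_0} \mu(\{x_j>0\})<\infty,
\]
each summand being finite because $j\in J_0$ means $\p(X_j=0)>0$. Subtraction yields $\tilde{\mu}(\{x_i>0\})=\infty$, and hence the max-id vector $\tilde{\bmX}$ associated with $\tilde{\mu}$ satisfies $\tilde{X}_i>0$ a.s.\ for every $i\notin J_0$. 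Denote by $\tilde{\bmX}^{(n)}$ the running maximum entering the stopping check at iteration $n$, i.e.\ the pointwise maximum of the atoms in $\tilde{N}_1\cup\cdots\cup \tilde{N}_{n-1}$. Since $\bigcup_n \Ss_n = [0,\infty)^d\setminus\{\bm 0\}$ covers all atoms of $\tilde{N}$, the $\tilde{\bmX}^{(n)}$ increase coordinatewise to $\tilde{\bmX}$, so almost surely there is a finite $n_0$ with $r_0:=\min_{i\notin J_0}\tilde{X}^{(n_0)}_i>0$. Choosing $n^* := \max(n_0,m(r_0))$ with $m(r_0)$ from Assumption \ref{ass3}, monotonicity gives $\tilde{X}^{(n^*)}_i\geq r_0$ for all $i\notin J_0$ while $\bigcup_{m\geq n^*}\Ss_m\subset B^\infty_{r_0}(\bm 0)$. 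Hence the stopping criterion is satisfied at iteration $n^*$ with $r=r_0$, and the loop terminates.

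For correctness, condition on the stopping iteration and let $r>0$ be the corresponding radius. Any atom $\bmx$ of a PRM with intensity $\tilde{\mu}$ that was not simulated lies in some $\Ss_m$ with $m$ at least the stopping index, hence in $B^\infty_r(\bm 0)$; consequently $x_i<r\leq \tilde{X}_i$ for every $i\notin J_0$, while $x_j=0$ for $j\in J_0$ by the support of $\tilde{\mu}$. No such atom can alter the pointwise maximum, so the simulated $\tilde{\bmX}$ has the law of $\max_{\bmx\in \tilde{N}}\bmx$ for a PRM $\tilde{N}$ with intensity $\tilde{\mu}$. Superposing the independent $N_j$ (with intensities $\mu_j$, pairwise disjointly supported) then yields a PRM with intensity $\sum_{j\in J_0}\mu_j+\tilde{\mu}=\mu$, whose pointwise maximum coincides with $\hat{\bmX}$; this identifies $\hat{\bmX}$ as max-id with exponent measure $\mu$.

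The main obstacle is the termination step, and specifically the identity $\tilde{\mu}(\{x_i>0\})=\infty$ for $i\notin J_0$; once this splitting fact is in place, the coupling of the random radius $r_0$ with the slice-shrinkage index $m(r_0)$ from Assumption \ref{ass3} is routine. The correctness step is essentially bookkeeping, relying on the independence of $\tilde{N}|_{\Ss_n}$ across disjoint slices and the disjointness of the supports of $\tilde{\mu}$ and the $\mu_j$.
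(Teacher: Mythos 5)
Your proof is correct and follows essentially the same route as the paper: termination is reduced to showing $\p(\tilde{X}_i>0)=1$ for every $i\notin J_0$ (which you justify slightly more explicitly than the paper via the finite-measure subtraction $\tilde{\mu}(\{x_i>0\})=\mu(\{x_i>0\})-\mu(\{x_i>0\}\cap\{\exists j\in J_0: x_j>0\})=\infty$ and the explicit choice $n^*=\max(n_0,m(r_0))$), and correctness follows from the observation that all unsimulated atoms lie in $B^\infty_r(\bm 0)$ and hence cannot affect the pointwise maximum, after which the disjointly supported intensities superpose to $\mu$. No gaps.
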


The use of Algorithm \ref{alg2} is illustrated by the following example in which we provide an exact simulation algorithm for a large family of max-id distributions. As a byproduct, the simulation algorithm for max-stable random vectors \cite[Algorithm 2] {dombryengelkeoestingexactsimmaxstable2016} and the simulation algorithm for random vectors with reciprocal Archimedean copulas \cite[Algorithm 1] {maiexactsimreciparchim2018} are unified in a common simulation scheme. 

\begin{ex}[Common simulation scheme for max-stable random vectors with unit Fr\'echet margins and random vectors with reciprocal Archimedean copula]
\label{examplescalemixturesimplexponentmeasure}
Let $S_{\Vert\cdot\Vert}:=\{ \bmx\in[0,\infty)^d\mid \Vert \bmx\Vert =1\}$ denote the non-negative part of the unit sphere of some norm $\Vert\cdot\Vert$ on $\R^d$. Consider an exponent measure of the form
\begin{align}
\label{eqnscalemixtureexponentmeasure}
    \mu(A)&=\mu_1\otimes \mu_2\lc \big\{(m_1,\bm m_2)\in (0,\infty)\times S_{\Vert\cdot\Vert} \mid m_1\bm m_2\in A\big\}\rc, A\in\bc\lc [0,\infty)^d\rc,
\end{align}
where $\mu_2$ is a probability measure on $S_{\Vert\cdot\Vert}$ and $\mu_1$ is a measure on $(0,\infty)$ which satisfies $\mu_1\big( [r,\infty)\big)<\infty$ for all $r>0$. Setting $\mu_1=s^{-2}\rmd s$ and $\Vert\cdot\Vert=\Vert\cdot\Vert_1$ yields the family of max-stable distributions with unit Fr\'echet margins \cite[Section 5]{resnickextreme2013},  whereas setting $\Vert\cdot\Vert=\Vert\cdot\Vert_1$ and $\mu_2$ to the uniform distribution on $S_{\Vert\cdot\Vert_1}$ yields the family of distributions with reciprocal Archimedean copula and marginal distribution function $\exp\big( -\mu_1((\cdot,\infty))\big)$ \cite{maiexactsimreciparchim2018}.

Let $\lc E_j\rc_{j\in\N}$ denote a sequence of iid exponential random variables and let $\lc \bm Y_i\rc_\iinn$ denote a sequence of iid random vectors with distribution $\mu_2$ (independent of $\lc E_j\rc_{j\in\N}$). It is well known that the standard Poisson point process on $[0,\infty)$ with unit intensity can be represented as $\suminf \delta_{\sum_{j=1}^i E_j}$. Denoting $\mu_1^{\leftharpoonup}(t):=\sup\{s\in (0,\infty) \mid \mu_1\lc [s,\infty)\rc \geq t\}$ it is easy to see that \cite[Proposition 3.7]{resnickextreme2013} implies that $\suminf\delta_{\mu_1^{\leftharpoonup}\lc \sum_{j=1}^i E_j\rc }$ is a PRM with intensity $\mu_1$. Moreover, \cite[Proposition 3.8]{resnickextreme2013} implies that 
$$N=\suminf\delta_{\lc \mu_1^{\leftharpoonup}\lc \sum_{j=1}^i E_j\rc , \bm Y_i\rc }$$
is a PRM with intensity $\mu_1\otimes \mu_2$. Therefore, simulating $N\lc \cdot\cap \lc  [r,\infty)\times S_{\Vert\cdot\Vert}\rc\rc$ is achieved by iteratively simulating the iid random vectors $(E_i,\bm Y_i)$ until $\mu_1^{\leftharpoonup}\lc \sum_{j=1}^i E_j\rc < r$. Note that this only requires the simulation of finitely many random vectors since $\mu_1\lc [r,\infty)\rc<\infty$. Choosing $\Ss_n= \big[\frac{1}{n},\frac{1}{n-1}\big)\times S_{\Vert\cdot\Vert}$ one can easily check that the $\Ss_n$ satisfy all the required constraints. Therefore, Algorithm \ref{alg2} can be applied to exponent measures of the form (\ref{eqnscalemixtureexponentmeasure}). The stopping criterion of Algorithm \ref{alg2} depends on the chosen norm $\Vert \cdot \Vert$, but if $\Vert \cdot \Vert=\Vert \cdot \Vert_p$ for some $p\geq 1$, it is easy to see that the algorithm stops at least as soon as $\mu_1^{\leftharpoonup}\lc \sum_{j=1}^i E_j\rc<\min_{i\not\in J_0} \Tilde{X}_i$. 
\end{ex}

\begin{rem}
Example \ref{examplescalemixturesimplexponentmeasure} is easily extended to exponent measures of the form 
$$\mu(A)=\int_0^\infty \int_S \id_{\{m_1\bm m_2\in A\}}  K(m_1,\rmd \bm m_2)\rmd\mu_1 (m_1),$$
where $S$ denotes a bounded subset of $[0,\infty)^d\setminus \{0\}$ and $K(\cdot,\cdot)$ is a Markov kernel which satisfies $K(m_1,S)=1$ $\mu_1$-almost surely. For examples such ideas are used in \cite[Section 3.3]{huser2021maxidmodels} to construct the finite dimensional distributions of a spatial max-id process.
\end{rem}

\section{Proofs and technical Lemmas}
\label{apptechproofs}

\begin{lem}[L\'evy measure of stochastic integral w.r.t.\ id process]
\label{lemlevymeasurestochasticintegration}
Let $f: [0,\infty)\times[0,\infty)\to[0,\infty)$ denote a measurable function such that $f(s,\cdot)$ is non-decreasing and right-continuous for all $s\in[0,\infty)$. Let $\lc H_t\rc_{t\geq 0}$ denote a non-negative c\`adl\`ag id-process of bounded variation on compact sets with L\'evy measure $\nu$ and drift $b$. Moreover, assume that $0\leq H_f(t)(\omega):=\int_0^\infty f(s,t)\rmd H_s (\omega) <\infty$ for all $t\geq 0$ and $\omega\in\Omega$ and that the conditions of \cite[Theorem 2.7]{rajput1989spectral} are satisfied. Then
$$  \big( H_f(t)\big)_\tnn=\lc\int_0^\infty f(s,t)\rmd H_s\rc_{t\geq 0} $$
defines a nnnd c\`adl\`ag id-process with L\'evy measure
$$\nu_f\lc A\rc = \nu\lc \bigg\{ x\in D\big( [0,\infty)\big) \ \bigg\vert\ \int_0^\infty f(s,\cdot)\rmd x(s) \in A \text{ and } \int_0^\infty f(s,\cdot)\rmd x(s) \not= \bm 0 \bigg\} \rc,  $$
$A\in \bc\lc D([0,\infty))\rc$ and drift $b_f(t):=\int_0^\infty f(s,t)b(\rmd s)$.
\end{lem}

\begin{proof}[\normalfont{\textbf{Proof of Lemma  \ref{lemlevymeasurestochasticintegration}}}]
Well-definedness follows from the conditions of \cite[Theorem 2.7]{rajput1989spectral}. Infinite divisibility is obvious. The c\`adl\`ag property of $H_f$ follows from $H_f(t)<\infty$ for all $t>0$ and the non-decreasingness and right-continuity of $f(s,\cdot)$.
Since $\nu$ is $\sigma$-finite \cite[Proposition 2.10]{idprocessesrosinski2018} implies that there exists a PRM $M=\sum_{i\in\N} \delta_{x_i}$ with intensity $\nu$, such that 
$$\lc H_t\rc_{t\geq 0}\sim \lc \int x(t)M(\rmd x) +b(t) \rc_{\tnn}=\lc \sum_\iinn x_i(t)+b(t)\rc_{\tnn}.$$ 
Moreover, $M$, $b$ and $\nu$ can be chosen to be concentrated on the space of non-negative c\`adl\`ag functions of bounded variation on compact sets \cite[Theorem 3.4]{idprocessesrosinski2018}, denoted as $BV^+_r$. Therefore,
\begin{align*}
  \lc H_f(t)\rc_{t\geq 0} &\sim   \lc \int_0^\infty f(s,t) \lc \nlim\sum_{i=1}^n x_i \rc(\rmd s)+\int_0^\infty f(s,t)b(\rmd s)\rc_{t\geq 0} \\
  &= \lc \nlim \int_0^\infty f(s,t) \lc \sum_{i=1}^n x_i \rc(\rmd s)+\int_0^\infty f(s,t)b(\rmd s)\rc_{t\geq 0}\\
  &= \lc \sum_{\substack{ i\in\N \\ \int_0^\infty f(s,\cdot) x_i(\rmd s)\not=0} } \int_0^\infty   f(s,t) x_i(\rmd s)+\int_0^\infty f(s,t)b(\rmd s)\rc_{t\geq 0}=  \lc \int_{BV^+_r}  \tilde{x}(t)d\tilde{M}(\tilde{x})+\int_0^\infty f(s,t)b(\rmd s)\rc_{t\geq 0},
\end{align*}
where $\tilde{M}:=\sum_{ i\in\N ,\ \int_0^\infty f(s,\cdot) x_i(\rmd s)\not=0} \delta_{\int_0^\infty f(s,\cdot)x_i(\rmd s)}$ denotes a PRM on $D([0,\infty))$ with intensity $\nu_f$, since the map $x\mapsto \int_0^\infty f(s,\cdot)x(\rmd s)$ is measurable in $D\big([0,\infty)\big)$ equipped with the sigma-algebra generated by the finite dimensional projections. $\nu_f$ satisfies $\nu_f(\bm 0)=0$ and $\int_{BV^+_r} x(t)\rmd \nu_f(x)<\infty$ by the conditions of \cite[Theorem 2.7]{rajput1989spectral}. Thus, $\nu_f$ is a L\'evy measure and the lemma is proven.

\end{proof}


\begin{proof}[\normalfont{\textbf{Proof of Theorem \ref{thmvalalg1}}}]
By (\ref{eqndefmuj}), $\mu_j$ is a finite measure for each $j\in J_0$. Thus, $\hat{\bmX}_j$ is obtained by the simulation of a finite PRM with intensity $\mu_j$. Therefore, Algorithm \ref{alg1} stops after finitely many steps if and only if the for-loop from lines 7-30 stops after finitely many steps. Thus, consider the setting of line 7 and let $\tilde{N}$ denote a PRM with intensity $\tilde{\mu}$ defined in (\ref{eqndefmutilde}). By the definition of $\tilde{\mu}$ we obtain that the associated max-id process $\tilde{\bmX}$ satisfies $\p(\Tilde{X}_{t_j}=0)=1$ for all $j\in J_0$ and $\p\lc \Tilde{X}_{t_i}>0\rc=1$ for all $i\not\in J_0$. Thus, if $\Tilde{X}_{t_i}=0$ and $i\not\in J_0$, there is almost surely some $c>0$ such that $\tilde{N}^+_{t_i}\subset \tilde{N}\lc \cdot \cap \{f\in C_0(T)\mid f(t_i)\geq c\}\rc$. Moreover, if $\Tilde{X}_{t_i}>0$ we get that $\tilde{N}^+_{t_i}\subset \tilde{N}(\cdot \cap \{f\in C_0(T)\mid f(t_i)\geq \Tilde{X}_{t_i}\}$ almost surely. Thus, the simulation of $\tilde{\bmX}$ only requires the simulation of PRMs with finite intensity measures and stops after finitely many steps.

It remains to prove that $\bmX_\bmt\sim\hat{\bmX}_\bmt$. Observe that
$\hat{\bmX}=\max\big\{  \max_{j\in J_0} \hat{\bmX}_j,\tilde{\bmX}\big\}$ in line 31 is the maximum of two independent stochastic processes. The first process $\max_{j\in J_0} \hat{\bmX}_j$ is an exact simulation of the sample path of a continuous max-id process with exponent measure $\mu\lc \cdot \cap \big( \cup_{j\in J_0}\{ f\in C_0(T)\mid f(t_j)>0 \}\big) \rc$. The second process $\tilde{\bmX}$ is an exact simulation of $\max_{f\in \tilde{N}^+_{(t_i)_{i\not\in J_0}}} f$. Thus, $\hat{\bmX}_\bmt=\max\big\{ \max_{j\in J_0} \hat{\bmX}_j(\bmt); \tilde{\bmX}_\bmt \big\}$ is an exact simulation of a max-id random vector with exponent measure 
\begin{align*}
    \sum_{j\in J_0} &\mu_j\lc \lc f(t_1),\ldots,f(t_d)\rc\in \cdot\rc+ \tilde{\mu}\lc \{ \lc f(t_1),\ldots,f(t_d)\rc\in \cdot\}\rc\\
    &=\mu\bigg( \lc f(t_1),\ldots,f(t_d)\rc\in \cdot \cap \bigg(\bigcup_{j\in J_0}\{  f(t_j)>0 \}\bigg)\bigg) + \mu\lc \{ \lc f(t_1),\ldots,f(t_d)\rc\in \cdot \cap \{  f(t_j)=0 \ \forall\  j\in J_0 \} \}\rc\\
    &=\mu\lc \{ \lc f(t_1),\ldots,f(t_d)\rc\in \cdot\}\rc,
\end{align*}
which is the exponent measure of $\bmX_\bmt$ and shows that $\hat{\bmX}_t\sim\bmX_t$.
\end{proof}

\begin{proof}[\normalfont{\textbf{Proof of Lemma \ref{lemexpectedsizeextremalfunction}}}]
Recall that \cite[Appendix A.3]{dombryeyiminkoconddist2013} verifies that $N_{\bmt}^+$ and $N_{\bmt}^-$ are well-defined point measures. Thus, $\big\vert N_{\bmt}^+\big\vert$ is an $\N_0\cup\{\infty\}$-valued random variable and $\e\lk \big\vert N_{\bmt}^+\big\vert\rk$ is well defined. Following the ideas of \cite{oestingschlatherzhou2013,oestingschlatherzhou2018}, consider some $a>0$ and the set $A_a=\{f\in C_0(T)\mid f(t_i)\geq a\text{ for some }  1\leq i\leq d\}$. Then $\mu(A_a)<\infty$, $\vert N(A_a)\vert\sim Poi\lc\mu(A_a)\rc$ and
\begin{align*}
    \e\lk \big\vert  N_{\bmt}^+( A_a)\big\vert\rk&=\e\lk \big\vert N( A_a)\big\vert\rk 
    -\e\lk \big\vert  N_{\bmt}^-(A_a)\big\vert\rk =\int_{C_0(T)} \id_{\{f(t_i)\geq a\text{ for some }  1\leq i\leq d\}} \rmd \mu(f) -\e\lk \e\lk \big \vert N_{\bmt}^-(A_a)\big\vert \ \bigg\vert\ N_{\bmt}^+ \rk \rk \\
    &=\int_{C_0(T)} \id_{\{f(t_i)\geq a\text{ for some }  1\leq i\leq d\}} \rmd \mu(f) -\e\lk \int_{C_0(T)} \id_{\{f(t_i)\geq a\text{ for some }  1\leq i\leq d\}} \id_{\{f(t_i)<X_{t_i}\text{ for all } 1\leq i\leq d\}} \rmd \mu(f) \rk\\
    &=\e\lk \int_{C_0(T)} \id_{\{f(t_i)\geq a\text{ for some }  1\leq i\leq d\}} \id_{\{f(t_i)\geq X_{t_i}\text{ for some } 1\leq i\leq d\}} \rmd \mu(f) \rk,
\end{align*}
where we used that $N_{\bmt}^-$ given $N_{\bmt}^+$ is distributed as a PRM with intensity $\id_{\{ f(t_i)< \max_{\tilde{f}\in N^+_\bmt}\tilde{f}(t_i) \}}\rmd\mu(f)$.
We conclude by considering three cases:
\begin{enumerate}
    \item[(i)] Assume that $\p\lc X_{t_j}>0\rc=1$ for all $1\leq i\leq d$. When $a\searrow 0$ the monotone convergence theorem implies that
\begin{align*}
    \e\lk \big\vert  N_{\bmt}^+\big\vert\rk =
    \e\lk \int_{C_0(T)} \id_{\{f(t_i)\geq X_{t_i}\text{ for some } 1\leq i\leq d\}} \rmd \mu(f) \rk.
\end{align*}
\item[(ii)]  If $\p\lc X_{t_j}=0\rc>0$ for some $1\leq i\leq d$ and $\mu$ is a finite measure then one may take $a=0$ which immediately implies 
\begin{align*}
    \e\lk \big\vert  N_{\bmt}^+\big\vert\rk =
    \e\lk \int_{C_0(T)} \id_{\{f(t_i)\geq X_{t_i}\text{ for some } 1\leq i\leq d\}} \rmd \mu(f) \rk.
\end{align*}
\item[(iii)]  If $\p\lc X_{t_j}=0\rc>0$ for some $1\leq i\leq d$ and $\mu$ is an infinite measure then 
$$\e\lk \big\vert   N_{\bmt}^+\big\vert\rk=\infty=\e\lk \int_{C_0(T)} \id_{\{f(t_i)\geq X_{t_i}\text{ for some } 1\leq i\leq d\}} \rmd \mu(f) \rk ,$$
since $\p\lc \big\vert N_{\bmt}^+\big\vert=\infty\rc\geq \p\lc X_{t_i}=0 \text{ for some }1\leq i\leq d\rc>0.$ 
\end{enumerate}

\end{proof}

\begin{proof}[\normalfont{\textbf{Proof of Theorem \ref{thmcomplexityofalg1}}}]
Obviously, the expected number of simulated functions (atoms) of the PRMs with intensities $\lc \mu_j\rc_{j\in J_0}$ is 
\begin{align*}
    &\sum_{j\in J_0} \mu_j\lc C_0(T) \rc=\sum_{j\in J_0}\mu\lc \cdot \cap  \Big\{ f\in C_0(T) \mid f(t_j)>0, f(t_k)=0,\ k<j,\ k\in J_0\Big \}\rc \\
    &=\mu\lc \Big\{ f\in C_0(T) \mid f(t_j)>0 \text{ for some } j\in J_0\Big \} \rc.
\end{align*}
Thus, the expected number of functions that need to be simulated to obtain $\lc \hat{\bmX}_j\rc_{j\in J_0}$  is equal to $\mu\Big( \big\{ f\in C_0(T) \mid f(t_j)>0$ $ \text{ for some } j\in J_0\big \} \Big)$.

It remains to compute the expected number of simulated functions to obtain $\Tilde{\bmX}$.
At each location $(t_{i})_{i\not\in J_0}$, according to Algorithm \ref{alg1} and assumption (\ref{assumptionsimprm}), we can consecutively simulate the atoms $f^{(i)}_1,f^{(i)}_2,\ldots$ of a PRM $\tilde{N}^{(i)}$ with intensity $\tilde{\mu}$ such that $f^{(i)}_1(t_i)\geq f^{(i)}_2(t_i)\geq\cdots$ until the first subextremal function is found. Since all simulated atoms which satisfy $f^{(i)}_j(t_k)\geq \Tilde{X}_{t_k}$ for some $k<i$, $k\not\in J_0$, are rejected we obtain that the number of functions that need to be simulated to obtain $\Tilde{\bmX}$ is
$$ \Big\vert \tilde{N}_{(t_i)_{i\not\in J_0}}^+ \Big\vert + \sum_{i\not\in J_0} \lc \Big\vert \Big\{ f^{(i)}_j \ \big\vert \ f^{(i)}_j(t_k)\geq X_{t_{i_k}}\text{ for some } ,k\not\in J_0, k<i;\ f^{(i)}_j(t_i)\geq X_{t_{i}} \Big\} \Big\vert +1\rc . $$
Note that the number of rejected functions is increased by $1$, since we have to simulate until the first subextremal function at each location $t_{i}$ is obtained. Thus, the expected number of functions that need to be simulated to obtain $\tilde{\bmX}$ is given by
\begin{align*}
\e\lk \big\vert \tilde{N}_{(t_i)_{i\not\in J_0}}^+ \big\vert \rk + d-\vert J_0\vert + \sum_{i\not\in J_0} \e\lk \Big\vert \Big\{ f^{(i)}_j \ \Big\vert \ f^{(i)}_j(t_k)\geq \Tilde{X}_{t_k}\text{ for some } k\not\in J_0, k<i;\ f^{(i)}_j(t_i)\geq \Tilde{X}_{t_k} \Big\} \big\vert\rk .
\end{align*}
The expectation of the first term is provided by Lemma \ref{lemexpectedsizeextremalfunction}. Thus we focus in the remaining expectation and obtain
\begin{align*}
    &\e\lk \Big\vert \Big\{ f^{(i)}_j \ \big\vert \ f^{(i)}_j(t_k)\geq \Tilde{X}_{t_k}\text{ for some } k\not\in J_0, k<i;\ f^{(i)}_j(t_i)\geq \Tilde{X}_{t_i} \Big\} \Big\vert\rk\\
    &=\e\lk \Big\vert \Big\{ f^{(i)}_j \ \big\vert \ f^{(i)}_j(t_k)\geq \Tilde{X}_{t_k}\text{ for some } k\not\in J_0, k<i;\ f^{(i)}_j(t_i)\geq \Tilde{X}_{t_i} \Big\} \Big\vert\ \bigg\vert\ \tilde{N}_{(t_k)_{k\not\in J_0, k<i}}^+ , \Big\{ f^{(i)}_j \ \big\vert \ f^{(i)}_j(t_k)<\Tilde{X}_{t_k}\text{ for all } k\in J_0,k<i\Big\} \rk .
\end{align*}
Note that $\Big\{ f^{(i)}_j \ \big\vert \ f^{(i)}_j(t_k)<\Tilde{X}_{t_k}\text{ for all } k\not\in J_0,k<i\Big\}$ and $\Big\{ f^{(i)}_j \ \big\vert \ f^{(i)}_j(t_k)\geq \Tilde{X}_{t_k}\text{ for some } k\not\in J_0, k<i \Big\}$ are disjoint measurable sets and therefore, conditioned on $\lc X_{t_k}\rc_{k< i,k\not\in J_0}$, the restrictions of the PRM $\tilde{N}^{(i)}$ on each of the two sets are independent PRMs with intensities $\id_{\{ f^{(i)}_j \ \vert \ f^{(i)}_j(t_k)<\Tilde{X}_{t_k}\text{ for all } k\in J_0,k<i\}}$ and $\id_{\{ f^{(i)}_j \ \vert \ f^{(i)}_j(t_k)\geq \Tilde{X}_{t_k}\text{ for some } k\not\in J_0, k<i \}}$. Moreover, since $ \tilde{N}_{(t_k)_{k\not\in J_0, k<i}}^+ $ and $ \Big\{ f^{(i)}_j \ \big\vert \ f^{(i)}_j(t_k)<\Tilde{X}_{t_k}\text{ for all } k\in J_0,k<i\Big\}$ determine $\lc\Tilde{X}_{t_k}\rc_{k< i,k\not\in J_0}$ and $\Tilde{X}_{t_i}$ we get
\begin{align*}
    &\e\lk \Big\vert \Big\{ f^{(i)}_j \ \big\vert \ f^{(i)}_j(t_k)\geq \Tilde{X}_{t_k}\text{ for some } k\not\in J_0, k<i;\ f^{(i)}_j(t_i)\geq \Tilde{X}_{t_i} \Big\} \Big\vert\ \bigg\vert\ \tilde{N}_{(t_k)_{k\not\in J_0, k<i}}^+ , \Big\{ f^{(i)}_j \ \big\vert \ f^{(i)}_j(t_k)<\Tilde{X}_{t_k}\text{ for all } k\in J_0,k<i \Big\} \rk \\
    &= \e\lk \int_{C_0(T)} \id_{\{ f(t_k)\geq \Tilde{X}_{t_k}\text{ for some } k\not\in J_0,  k<i ;\ f(t_i)\geq \Tilde{X}_{t_i} \}} \rmd \tilde{\mu}(f) \rk \\
    &=\e\lk \int_{C_0(T)} \id_{\{ f(t_k)\geq \Tilde{X}_{t_k}\text{ for some } k\not\in J_0,k<i \}} \rmd \tilde{\mu}(f) \rk - \e\lk \int_{C_0(T)} \id_{\{ f(t_k)\geq \Tilde{X}_{t_k}\text{ for some } k\not\in J_0,  k<i ;\ f(t_i)< \Tilde{X}_{t_i} \}} \rmd \tilde{\mu}(f) \rk .
\end{align*}
Now, Lemma \ref{lemexpectedsizeextremalfunction} implies
\begin{align*}
    & \e\lk \int_{C_0(T)} \id_{\{ f(t_k)\geq \Tilde{X}_{t_k}\text{ for some } k\not\in J_0,k<i \}} \rmd \tilde{\mu}(f) \rk - \e\lk \int_{C_0(T)} \id_{\{ f(t_k)\geq \Tilde{X}_{t_k}\text{ for some } k\not\in J_0,  k<i ;\ f(t_i)< \Tilde{X}_{t_i} \}} \rmd \tilde{\mu}(f) \rk \\
    &= \e\lk \big\vert \tilde{N}_{(t_k)_{k\not\in J_0},k<i}^+ \big\vert \rk -  \e\lk \int_{C_0(T)} \lc \id_{\{ f(t_k)\geq \Tilde{X}_{t_k}\text{ for some } k\not\in J_0,  k\leq i  \}} 
    - \id_{\{ f(t_k)< \Tilde{X}_{t_k}\text{ for all } k\not\in J_0,  k< i ; f(t_i)\geq \Tilde{X}_{t_i} \}}\rc \lc 1-\id_{\{f (t_i)\geq \Tilde{X}_{t_i}\}}\rc \rmd \tilde{\mu}(f) \rk \\
    &= \e\lk \big\vert \tilde{N}_{(t_k)_{k\not\in J_0},k<i}^+ \big\vert \rk -  \e\lk \int_{C_0(T)} \id_{\{ f(t_k)\geq \Tilde{X}_{t_k}\text{ for some } k\not\in J_0,  k\leq i  \}} 
     \lc 1-\id_{\{f (t_i)\geq \Tilde{X}_{t_i}\}}\rc \rmd \tilde{\mu}(f) \rk \\
    &=\e\lk \big\vert \tilde{N}_{(t_k)_{k\not\in J_0},k<i}^+ \big\vert \rk -  \e\lk \big\vert \tilde{N}_{(t_k)_{k\not\in J_0},k\leq i}^+ \big\vert \rk +\e\lk \int_{C_0(T)} \id_{\{ f(t_k)\geq \Tilde{X}_{t_k}\text{ for some } k\not\in J_0,  k\leq i  \}} \id_{\{ f(t_i)\geq \Tilde{X}(t_i),f(t_k)\geq \Tilde{X}_{t_k} \text{ for some }k\in J_0,k\leq i \}} \rmd \tilde{\mu}(f) \rk \\
    &=\e\lk \big\vert \tilde{N}_{(t_k)_{k\not\in J_0},k<i}^+ \big\vert \rk -  \e\lk \big\vert \tilde{N}_{(t_k)_{k\not\in J_0},k\leq i}^+ \big\vert \rk+\e\lk \int_{C_0(T)}  \id_{\{ f(t_i)\geq \Tilde{X}(t_i)\}} \rmd \tilde{\mu}(f) \rk \\
    &=\e\lk \big\vert \tilde{N}_{(t_k)_{k\not\in J_0},k<i}^+ \big\vert \rk -  \e\lk \big\vert \tilde{N}_{(t_k)_{k\not\in J_0},k\leq i}^+ \big\vert \rk+\e\lk  \tilde{\mu}\lc \big\{ f\in C_0(T)\mid f(t_i)\in \big[\Tilde{X}_{t_i},\infty\big) \big\} \rc \rk
\end{align*}
Thus,
\begin{align}
&\e\lk \big\vert \tilde{N}_{(t_i)_{i\not\in J_0}}^+ \big\vert \rk + \sum_{i\not\in J_0} \e\lk \big\vert \Big\{ f^{(i)}_j \ \big\vert \ f^{(i)}_j(t_k)\geq \Tilde{X}_{t_k}\text{ for some } k\not\in J_0, k<i;\ f^{(i)}_j(t_i)\geq \Tilde{X}_{t_k} \Big\} \big\vert\rk+ d-\vert J_0\vert  \nonumber \\
&= \e\lk \big\vert \tilde{N}_{(t_i)_{i\not\in J_0}}^+ \big\vert \rk + \sum_{i=1,\ldots,n, i\not\in J_0}  \e\lk \big\vert \tilde{N}_{(t_k)_{k\not\in J_0},k<i}^+ \big\vert \rk -  \e\lk \big\vert \tilde{N}_{(t_k)_{k\not\in J_0},k\leq i}^+ \big\vert \rk+\e\lk  \tilde{\mu}\lc \big\{ f\in C_0(T)\mid f(t_i)\in \big[\Tilde{X}_{t_i},\infty\big) \big\} \rc \rk +d-\vert J_0\vert  \nonumber \\
&=d-\vert J_0\vert +\sum_{ i\not\in J_0} \e\lk  \tilde{\mu}\lc \big\{ f\in C_0(T)\mid f(t_i)\in \big[\Tilde{X}_{t_i},\infty\big) \big\}\rc \rk  \label{eqnexpnmboffcts}
\end{align}

If $\Tilde{X}_{(t_i)_{i\not\in J_0}}$ has continuous marginal distribution we can stop as soon as we found an extremal function at each location $(t_i)_{i\not \in J_0}$. Therefore, the term $d-\vert J_0\vert$ which comes from the simulation of the first subextremal function may be omitted from (\ref{eqnexpnmboffcts}) and we get
\begin{align*}
&\sum_{ i\not\in J_0} \e\lk  \tilde{\mu}\lc \big\{ f\in C_0(T)\mid f(t_i)\in \big[\Tilde{X}_{t_i},\infty\big) \big\} \rc \rk =\sum_{ i\not\in J_0} \e\lk  \tilde{\mu}\lc \big\{ f\in C_0(T)\mid f(t_i)\in \big(\Tilde{X}_{t_i},\infty\big) \big\} \rc \rk =\sum_{ i\not\in J_0} \e\lk  -\log\lc 1- F_{t_i}\lc \Tilde{X}_{t_i}\rc \rc \rk\\
&=d-\vert J_0\vert 
\end{align*}
where $F_t(x):=\p(\Tilde{X}_t\leq x)$ denotes the marginal distribution function of $\Tilde{X}_t$ and it is well known that $-\log\lc 1- F_{t_i}(\Tilde{X}_{t_i})\rc$ $\sim\ $Exp$(1)$, since $F_{t_i}(\Tilde{X}_{t_i})$ is uniformly distributed on $[0,1]$ when $\Tilde{X}_t$ follows a continuous distribution.
Combing the results above we obtain the claimed complexity of Algorithm \ref{alg1}.

\end{proof}

\begin{proof}[\normalfont\textbf{Proof of Lemma \ref{lemlevymeasureselfsimbdlp}}]
Note that the L\'evy measure $\nu_k$ of $L^{(k)}$ on $\mathbf{M}$ is given by the image measure of the map 
$$ \big(\lc (0,\infty),\lambda_{0,\infty}\rc \times \lc (0,\infty),\rho_k\rc\big) \to \mathbf{M} ;\ (s,a)\mapsto a\id_{\{\cdot\geq s\}}, $$
where $\lambda_{0,\infty}$ denotes the Lebesgue measure on $(0,\infty)$ and $\rho_k$ denotes the univariate L\'evy measure of $L^{(k)}_1$. To derive the path L\'evy measure of the self similar subordinator $\hto$ we first need to derive the path L\'evy measures of the two independent id processes $\hat{L}^{(1)}_t:=\int_{-\log\lc \min\{t; 1\}\rc}^\infty \exp(-s)\rmd L^{(k,1)}_s$ and $\hat{L}^{(2)}_t:= \int_0^{\log\lc \max\{1,t\}\rc} \exp(s)\rmd L^{(k,2)}_s$.
Note that for $t\in[0,1]$  
$$ \int_{-\log(t)}^\infty   \exp(-y) \lc a\id_{\{\cdot \geq s\}}\rc(\rmd y) =a\exp(-s)\id_{\{s\geq -\log(t )\}} $$
and for $t>1$ 
$$ \int_{0}^{\log(t)}    \exp(y) \lc a\id_{\{\cdot \geq s\}}\rc(\rmd y) =a\exp(s)\id_{\{s\leq \log(t)\}} .$$ 
An application of Lemma \ref{lemlevymeasurestochasticintegration} shows that the L\'evy measure of $\hat{L}^{(1)}$ is given by
$$\nu_1(A)=\lambda_{0,\infty}\otimes \rho_k  \lc \bigg\{ (s,a)\ \bigg\vert\  a\exp(-s)\id_{\{s\geq -\log(\min\{\cdot,1\} )\}}  \in A  \bigg\} \rc;\ A\in\bc({\mathbf{M}}) $$
and that the L\'evy measure of $\hat{L}^{(2)}$ is given by
$$\nu_2(A)=\lambda_{0,\infty}\otimes \rho_k  \lc \bigg\{ (s,a)\ \bigg\vert\  a\exp(s )\id_{\{s\leq \log(\max\{\cdot,1\} )\}}  \in A  \bigg\} \rc ;\ A\in\bc({\mathbf{M}}).$$
This implies that the path L\'evy measure of the self-similar subordinator $H$ is given by $\nu=\nu_1+\nu_2$, since $\hat{L}^{(1)}$ and  $\hat{L}^{(2)}$ are independent. It remains to verify (\ref{pathlevymeasureselfsimBDLP}). To this purpose we simply verify that the Laplace transform of $H$ coincides with the Laplace transform of an id process with path L\'evy measure (\ref{pathlevymeasureselfsimBDLP}), since a path L\'evy measure is unique. 
\begin{align*}
    \e\lk \exp\lc \sum_{i=1}^d z_iH(t_i)\rc\rk&  =  \exp\lc \bigintssss_0^\infty \bigintssss_0^\infty \lc 1-\exp\lc -  \sum_{i=1}^d z_i a\exp(-s) \id_{\{s\geq -\log(\min\{t_i,1\} )\}} \rc\rc \rmd s \rho_k(\rmd a) \rc \\
    &+  \exp\lc \bigintssss_0^\infty\bigintssss_0^\infty \lc 1-\exp\lc -  \sum_{i=1}^d z_i a\exp(s)\id_{\{s\leq \log(\max\{t_i,1\})\}} \rc\rc\rmd s \rho_k(\rmd a)  \rc \\
    &=\exp\lc \bigintssss_0^\infty \bigintssss_0^1 \lc 1-\exp\lc -  \sum_{i=1}^d z_i as\id_{\{-\log(s)\geq -\log(\min\{t_i,1\} )\}} \rc\rc s^{-1}  \rmd s \rho_k(\rmd a) \rc\\
    &+ \exp\lc \bigintssss_0^\infty\bigintssss_1^\infty \lc 1-\exp\lc -  \sum_{i=1}^d z_i as\id_{\{\log(s)\leq \log(\max\{t_i,1\})\}} \rc\rc s^{-1}\rmd s \rho_k(\rmd a)  \rc \\
    &=\exp\lc \bigintssss_0^\infty \bigintssss_0^\infty \lc 1-\exp\lc -  \sum_{i=1}^d z_i as\id_{\{t_i\geq s\}} \rc\rc s^{-1} \rmd s \rho_k(\rmd a) \rc
\end{align*}
\end{proof}

\begin{rem}[Path L\'evy measure of general self-similar processes]
The path L\'evy measure representation in (\ref{pathlevymeasureselfsimBDLP}) is not only valid for nnnd self-similar processes but also valid for general self-similar processes where $\rho_k$ denotes the L\'evy measure of the BDLP of $H_1$. Moreover, since a self-similar process with index $\gamma>0$ corresponds to a time change of a self-similar process with index $1$, the path L\'evy measure $\nu^{(\gamma)}$ of a self-similar process with index $\gamma$ is simply obtained by applying the same ``time change'' to the L\'evy measure of the self-similar process with index $1$, i.e.\ by the image measure of $({\mathbf{M}},\nu)\to {\mathbf{M}},\ \lc f(t)\rc_{t\geq 0}\mapsto \lc f(t^\gamma)\rc_{t\geq 0}$.
\end{rem}

\begin{proof}[\normalfont{\textbf{Proof of Theorem \ref{thmalg2}}}]
The $\mu_j$ are finite intensity measures by their definition in (\ref{eqndefmuj}). Therefore, Algorithm \ref{alg2} stops after finitely many steps if and only if the while-loop from lines 7-11 stops after finitely many steps. It is obvious that the simulation of each PRM $\tilde{N}_n$ only requires the simulation of finitely many points.  Thus, we need to check that the condition $C:=\big\{$\text{there is no $r>0$ such that $ \cup_{m\geq n} \Ss_{m}\subset B^\infty_r\lc \bm 0\rc $ \textbf{and} $\min_{i\not\in J_0} \Tilde{X}_i\geq r\big\}$} is violated after finitely many steps. Let $\tilde{N}$ denote the PRM with intensity $\tilde{\mu}$. It is easy to see that condition $C$ is eventually violated after finitely many steps if and only if $\min_{i\not\in J_0,\bmx\in \tilde{N}} x_i>0$ almost surely. By the construction of $\tilde{\mu}$ we have $\p( \Tilde{X}_i=0)=0$ for all $i\not\in J_0$, which implies that $\min_{i\not\in J_0,\bmx\in \tilde{N}} x_i>0$ almost surely and the algorithm stops after finitely many steps. 

It remains to prove that $\hat{\bmX}\sim \bmX$. Clearly, if condition $C$ is violated for some $n\in\N$ and $r>0$, then all points of the PRM $\tilde{N}$ in $ \cup_{m<n} \Ss_{m}$ have already been simulated and $\tilde{\bmX}=\max_{\bmx\in \tilde{N}\lc\cdot\cap\lc \cup_{m< n} \Ss_{m}\rc\rc}\bmx$. A point $\bmx \in \tilde{N}\lc \cdot \cap \lc\cup_{m\geq n} \Ss_{m}\rc\rc$ can only increase a non-zero component $( \Tilde{X}_j)_{j\not\in J_0}$ of $\tilde{\bmX}$ if $\max_{i\not\in J_0} x_j\geq r$. However, since $\cup_{m\geq n} \Ss_{m}\subset B^\infty_r$, we actually have that $\tilde{\bmX}=\max_{\bmx\in \tilde{N}} \bmx$. Thus, $\tilde{\bmX}$ is max-id with exponent measure $\tilde{\mu}$. Combining this with the fact that the $\mu_j$ and $\tilde{\mu}$ are supported on disjoint sets, we obtain that $\hat{\bmX}$ is max-id with exponent measure $\sum_{j\in J_0} \mu_j+\tilde{\mu}=\mu$, which proves the claim.
\end{proof}

\section*{Acknowledgements}

I want to thank Jan-Frederik Mai for encouraging me to pursue the idea of deriving an exact simulation algorithm for exchangeable min-id sequences and repeatedly proofreading earlier versions of the manuscript. Moreover, I want to thank Matthias Scherer for repeatedly proofreading earlier versions of the manuscript. Their helpful comments largely improved the quality of the paper. Last but not least, I want to thank an anonymous referee for pointing out how to conduct a complexity analysis of the proposed simulation algorithm and another anonymous referee and the associate editor for their constructive comments which led to significant improvements of the paper.


\bibliographystyle{myjmva}
\bibliography{references.bib}

\end{document}